\documentclass[journal]{IEEEtran}

\usepackage{graphicx}
\graphicspath{{../pdf/}{../jpeg/}}
\DeclareGraphicsExtensions{.pdf,.jpeg,.png}

\usepackage[cmex10]{amsmath}
\usepackage{array}
\usepackage{mdwmath}
\usepackage{mdwtab}
\usepackage{eqparbox}
\usepackage{url}
\usepackage{algorithm} 
\usepackage{algorithmic} 
\usepackage{amsmath}
\usepackage{graphics}
\usepackage{epsfig}
\usepackage{multirow}
\usepackage{setspace}
\usepackage{booktabs}
\usepackage{mathrsfs}
\usepackage{epsfig}
\usepackage{color}
\usepackage{amssymb,latexsym,amsfonts,amsmath}
\usepackage{graphicx}
\usepackage{cite}
\usepackage{subfigure,graphicx}
\usepackage{tabularx}

\allowdisplaybreaks[4]
\usepackage[
linkcolor=red,citecolor=blue,urlcolor=green,]{hyperref}
\usepackage[doipre={doi:~}]{uri}

\newtheorem{theorem}{Theorem}
\newtheorem{lemma}{Lemma}

\newtheorem{corollary}{Corollary}
\newtheorem{definition}{Definition}
\newtheorem{assumption}{Assumption}
\newtheorem{remark}{Remark}

\newcommand{\R}{{\mathbb{R}}}

\begin{document}
    \title{A Network Transformation Mapping Approach to  Synchronization of Multi-Agent Systems With Disconnected Switching Topologies}
  \author{Haotian~Xu,
          Bohui~Wang,~\IEEEmembership{Senior Member,~IEEE},
          Shuai~Liu,~\IEEEmembership{Member,~IEEE},
          Chao~Shen,~\IEEEmembership{Senior Member,~IEEE},
          Xiangyu Meng,~\IEEEmembership{Senior Member,~IEEE},
          Guanghui Wen,~\IEEEmembership{Senior Member,~IEEE}


\thanks{H. Xu and S. Liu are with the School of Control Science and	Engineering, Shandong University, Jinan 250061, China~(e-mail: xuhaotian\_1993@126.com; liushuai@sdu.edu.cn)}
\thanks{B. Wang and C. Shen are with the School of Cyber Science and Engineering, Xi'an Jiaotong University, Xi'an 710049, China~(e-mail: wang31aa@126.com; cshen@sei.xjtu.edu.cn)}
\thanks{X. Meng is with the Division of Electrical and Computer Engineering, Louisiana State University, Baton Rouge, LA 70803, USA. (E-mail:
xmeng5@lsu.edu}
}

\markboth{IEEE Transactions on Automatic Control
}{Roberg \MakeLowercase{\textit{et al.}}: High-Efficiency Diode and Transistor Rectifiers}

\maketitle

\begin{abstract}
This paper focuses on the multi-agent synchronization problem with an open-loop unstable leader and followers under the switching topologies. For this issue, the typical approach is intermittent communication (including a spanning tree intermittently) or fast switching strategy. {\color{blue}We here consider a more general scenario where each communication link between two agents is randomly disconnected and reconnected, and the durations of both the disconnected intervals and the connected intervals follow negative exponential distributions.} To handle this issue, we propose a network transformation mapping method that divides the communication network into reachable and unreachable parts at any time. A node can access the leader's information and synchronize only when it lies in the reachable part; otherwise, it cannot. For each node, the synchronization speed is designed such that its convergence amplitude in the reachable part exceeds its divergence amplitude in the unreachable part. Hence, all nodes achieve asymptotic synchronization over the entire time domain. We further develop adaptive strategies for coupling gains to reduce the computational complexity introduced by the network transformation mapping. The proposed method is also applicable to jointly connected switching topologies and distributed observers under topologies that lack a spanning tree at any time. Finally, two numerical simulations—synchronous control of multi-one-link manipulator systems and multi-motor systems based on Fieldbus—are provided to demonstrate the effectiveness of our approach.
\end{abstract}

\begin{IEEEkeywords}
Synchronization, multi-agent systems, Distributed observers, Adaptive coupling gain, Switching topologies
\end{IEEEkeywords}

%
\IEEEpeerreviewmaketitle


\section{Introduction}

\IEEEPARstart{M}{ulti}-agent systems is an important theoretical tool to solve future intelligent control and applications, such as cyber-physics systems \cite{wang2023timevary,10654516}, Internet of vehicles \cite{10185611,10012572}, Wireless sensor networks \cite{ZHENG2024111839,Xu2024JAS}, and so on. Up to now, there has been extensive research on multi-agent systems under a static network including the leader-following consensus problem \cite{2014The}, cooperative output regulation problem \cite{8809186}, and coordinated mission rendezvous problem \cite{WANG2021109931}, as well as cross integration with other research topics, such as network security \cite{10504568,9760723}, reinforcement learning \cite{9597491,10452844}, distributed optimization \cite{10664010,9547726}, and so on. Moreover, in recent 20 years, the multi-agent problem under switching topologies has also been thoroughly studied by \cite{Wei2010Leader,Cheng2020Seeking,9102431Exponential,Li2021DynamicConsensus,Huang2017The,BohuiCooperative,Wen2017Distributed,2017Pinning,Zhu2016A,khan2019output,2020Comments} and references therein. 

The existing literature on multi-agent systems under switching topologies primarily investigates two categories of problems. The first category focuses on scenarios where the leader or virtual reference system (hereafter collectively referred to as the reference system) is bounded. In such cases, multi-agent systems can readily achieve reference tracking under jointly connected switching topologies \cite{zhang2023DSESWTICH,Lu2020Robust,10323478,khan2019output,liu2024distributed,ZHANG2024111755}. The second category addresses situations involving unbounded reference systems, where current studies predominantly employ either fast switching methods \cite{HE2021110021,StilwellSufficient,HE2021110021,wang2024distributed} or intermittent communication strategies \cite{10328694,ZUO2023110952,BohuiCooperative,2017Pinning} to achieve consensus under switching topologies. 

{\color{blue}Notably, existing approaches for unbounded reference systems remain inapplicable to scenarios of real-time disconnected switching networks (real-time disconnected means that at any time, there does not exist a spanning tree that allows all followers to access the leader's information).} In practical implementations, such switching topologies are prevalent in large-scale multi-agent systems due to persistent challenges including bandwidth constraints, communication network failures, and cyber attacks. Moreover, multi-agent systems with unbounded reference systems represent a significant class of applications, particularly in scenarios such as second-order or high-order integrator systems, multi-motor synchronous control systems based on Fieldbus, and formation problems about UAVs, unmanned submarines, spacecraft, and so on \cite{9681160,9484806,7122276}. Therefore, filling this critical research gap in the failure of multi-agent systems to track unbounded reference systems under real-time disconnected switching networks holds substantial theoretical and practical significance.

However, addressing this issue is really difficult because there are some communication links in the real-time disconnect switching networks that have negative effects, leading to the inability of multi-agent systems with unbounded reference systems to achieve consistency. For example, when agent $i$ gains access to the reference system's information during some time interval, its tracking error dynamics $e_i(t)$ should theoretically exhibit convergence tendency under appropriate control laws---a necessary condition for achieving consensus in switching topologies. However, if during the same interval agent $i$ simultaneously receives state information from neighboring agent $j$ that lacks reference system accessibility, {\color{blue}the divergent error dynamics $e_j(t)$ (note that $e_j(t)$ diverges when $j$ has no access to the leader during this interval; this does not mean $e_j(t)$ always diverges, as it may converge when $j$ has access) will be transmitted through the interconnection.} This error contamination phenomenon forces the $e_i(t)$ that should exhibit a convergent trend to inherit the divergence behavior. Unlike bounded reference systems where errors stay within invariant sets and contamination is prevented, there exist divergent interference signals that are continuously generated by the unbounded reference trajectories. Consequently, there always exists a subset of agents whose error dynamics diverge persistently over every switching interval, ultimately violating the consensus conditions derived from classical connectivity assumptions.

The content analyzed aforementioned is the fundamental reason why existing research cannot achieve synchronization between multi-agent systems and an unbounded reference system under real-time disconnected switching topologies. To overcome this challenge, this paper innovatively proposes a network transformation mapping method that enables agents to automatically determine whether received information is valid, thereby fundamentally solving the error contamination issue that prevents synchronization of multi-agent systems with unbounded reference systems under real-time disconnected switching topologies. After resolving this most critical issue, several key challenges still need to be overcome to achieve consensus in such systems. 1) Effective Time Estimation: When communication networks experience probabilistic interruptions and reconnections, estimating the effective duration during which each agent acquires reference system information becomes crucial for deriving sufficient consensus conditions. 2) Distribution Maintenance: While employing network transformation mapping, ensuring agents avoid using centralized information remains another essential challenge that must be addressed. 3) An adaptive coupling gain strategy is proposed to circumvent the computational complexity inherent in the network transformation mapping, eliminating the need for explicit precomputation of thresholds or topology parameters, thereby enabling synchronization under real-time disconnected switching topologies.

The main contributions of this paper lie in three aspects. 1) We propose a network transformation mapping method whereby agents autonomously determine the utility of neighbor information through localized exchange of network transformation mapping parameters. This approach prevents error propagation from divergent agents to those exhibiting convergent dynamics. 2) This paper establishes, for the first time, synchronization of unbounded multi-agent systems under real-time disconnected switching topologies. Existing approaches fundamentally require fast switching strategies to achieve consensus in such networks \cite{StilwellSufficient,HE2021110021}. 3) The proposed network transformation mapping method is also proven in the case of synchronization for multi-agent systems with jointly connected switching topologies and distributed observers against switching topologies without a spanning tree at any time.

The paper is organized as follows. Section \ref{sec2} introduces notations and formulates problems. A new synchronous controller, as well as the developed transformation network, are proposed in Section \ref{sec3}. And the synchronization with adaptive coupling gain is proved in Section \ref{sec4}; Then, Section \ref{sec5} proposes the further application of network transformation mapping. Section \ref{sec6} simulates the method for verifying the effectiveness. Finally, the conclusions of this paper are given in Section \ref{sec7}.

\section{Preliminaries and problem formulation}\label{sec2}
\subsection{Preliminaries}
The notations and graph theory used in this paper are illustrated as follows.

\textbf{(N1)} Let $\mathbb{R}^n$ be the Euclidean space concerning $n$-dimentional column vertors and $\mathbb{R}^{m\times n}$ be the Euclidean space of $m\times n$-dimentional matrices. Denote $A^T$ the transpose of matrix $A$, and $sym\{A\}$ is defined by $A+A^T$. For the symmetric matrix $A$, $\bar{\lambda}(A)$ and $\underline{\lambda}(A)$ stands for  maximum and minimum eigenvalue of $A$ respectively. We cast $col\{A_1,\ldots,A_n\}$ as $[A_1^T,\ldots,A_n^T]^T$, and $diag\{A_1,\ldots,A_n\}$ as a block diagonal matrix with $A_i$ on its diagonal, where $A_1,\ldots,A_n$ are matrices with arbitrary dimentions. $1_n$ is the column vector belonging to $\mathbb{R}^n$ with all entries equal $1$, and $I_n$ is the identity matrix belonging to $\mathbb{R}^{n\times n}$.

\textbf{(N2)} A graph given by $\mathcal{G}=\{\mathcal{V},\mathcal{E}\}$ includes $N$ nodes (represented by $\mathcal{V}$) and arcs (represented by $\mathcal{E}$) among nodes. $\mathcal{A}\in\mathbb{R}^{N\times N}$ stands for the adjacency matrix of $\mathcal{G}$, whose element $\alpha_{ij}=1$ indicates an arc from $j$ to $i$ and $\alpha_{ij}=0$ otherwise. The Laplacian matrix associated with $\mathcal{G}$ is notated as $\mathcal{L}=\mathcal{D}-\mathcal{A}$ where $d_i=\sum_{j\neq i}\alpha_{ij}$ and $\mathcal{D}=diag\{d_1,\ldots,d_N\}$. A path from node $i$ to node $j$ is a sequence of arcs $(i,k_1)$, $(k_1,k_2)$, $\cdots$, $(k_l,j)$ with distinct nodes $i, k_1, \cdots, k_l, j$. A graph $\mathcal{G}$ contains a spanning tree rooted at $i$ if there is a path pointing from $i$ to any other nodes. Let $\sigma(t): t\longmapsto \mathcal{P}$ be a piecewise {\color{blue}constant} function, where the element in $\mathcal{P}$ is the index of networks. Then, we define time-dependent symbols such as $\mathcal{G}^\sigma$,  $\mathcal{E}^\sigma$, $\alpha_{ij}^\sigma$, and so on. Also, for a variable $s$, a symbol $s^\sigma$ means that it is a time-dependent variable and its value depends on $\sigma(t)$.

\textbf{(N3)} In this paper, $\|\cdot\|$ denotes 2-norm of a vector or a matrix. $\|x\|_P$ stands for $x^TPx$ with a given symmetric positive definite matrix $P$ and a vector $x$. $\otimes$ is the Kronecker product between matrices $A$ and $B$. $\wedge$ means logic AND, i.e., $a\wedge b=\min\{a,b\}$ for $a,b\in\mathbb{R}$. Correspondingly, $\vee$ is logic OR with definition $a\vee b=\max\{a,b\}$. In addition, large operator $\bigwedge$ and $\bigvee$ denote the multivariate logic operation. For example, $\bigwedge_{i=1}^na_i=a_1\wedge\cdots\wedge a_n$. 
where $A$ is a matrix.

Finally, due to the numerous symbols used in this article, to avoid confusion for readers regarding symbol definitions, we provide a list of key symbol mappings for their reference (see Table \ref{tab:notation}).
\begin{table}[htbp]
	\centering
	\caption{Notation Table}
	\label{tab:notation}
	\begin{tabularx}{\linewidth}{c>{\raggedright\arraybackslash}X}
		\hline
		Symbol & Meaning \\
		\hline
		$\kappa_i$ & Fixed coupling gain of agent $i$ \\
		$\gamma_i$ & Integral part of adaptive coupling gain \\
		$\omega_i$ & Instantaneous part of adaptive coupling gain\\
		$\kappa_*$ & Constant used to design feedback gain $K$\\
		$\theta_i^\sigma$ & $i$th diagonal element of diagonal matrix $\Theta_\star^\sigma$ \\
		$\Theta_\star^\sigma$ & Positive definite diagonal matrix satisfying $\operatorname{sym}\{\Theta_\star^\sigma \mathcal{H}_\star^\sigma\} > \kappa_* I_{\pi(\sigma)}$, where $\pi(\sigma)=|\mathcal{V}_\star^\sigma|$ \\
		$\mathcal{V}_\star^\sigma, \mathcal{V}_\diamond^\sigma$ & Reachable node set ($\star$) and unreachable node set ($\diamond$) after transformation \\
		$\mathcal{L}_\star^\sigma,\mathcal{L}_\diamond^\sigma$ & Laplacian matrix of graph $\mathcal{G}_\star^\sigma$ and $\mathcal{G}_\diamond^\sigma$ \\
		$\mathcal{H}_\star^\sigma$ & Augmented Laplacian of transformed reachable subgraph: $\mathcal{H}_\star^\sigma = \mathcal{L}_\star^\sigma + \mathcal{S}_\star^\sigma$ \\
		$B$ & Input matrix in agent dynamics: $\dot{x}_i = A x_i + f(x_i) + B u_i$ \\
		$\mathcal{B}^\sigma$ & Transformed adjacency weight matrix with entries $\beta_{ij}^\sigma$ \\
		$\beta_{ij}^\sigma$ & Transformed adjacency weight: $\beta_{ij}^\sigma = \alpha_{ij}^\sigma \delta_{ij}^\sigma$ \\
		$\alpha_{ij}^\sigma$ & Original adjacency weight of communication network ($0$ or $1$) \\
		$\delta_{ij}^\sigma$ & Logical operation result: $\delta_{ij}^\sigma = \ell_{i}^\sigma \wedge \ell_{j}^\sigma$ \\
		$\ell_{ik}^\sigma$ & Reachability indicator of node $i$: $1$ if there exists a directed path from a pinning node to $i$ \\
		$\sigma(t)$ & Switching signal, piecewise constant function taking values in finite set $\mathcal{P}$ \\
		$m_c$ & Long-run proportion of time that all arcs on a path are simultaneously up: $(\mu/(\lambda+\mu))^N$ \\
		$\wp_\star$ & Convergence rate of reachable subgraph (defined in Theorem 1) \\
		$\wp_\diamond$ & Divergence rate of unreachable subgraph (defined in Theorem 2) \\
		\hline
	\end{tabularx}
\end{table}

\subsection{Problem formulation}

Now, we formulate the problem of synchronization for nonlinear multi-agent systems with switching networks. The dynamics of the $i$th agent is assumed to be governed by
\begin{align}\label{sys}
\dot{x}_i=Ax_i+f(x_i)+Bu_i,~i=1,\ldots,N,
\end{align}
where $x_i\in\R^n$ and $u_i\in\R^m$ are, respectively, system states and control inputs; $A\in\R^{n\times n}$ and $B\in\R^{n\times m}$ are the system and input matrices with advisable dimension, respectively; $f(\cdot)$ represents the $n$-dimensional smooth vector field with respect to $x_i$. The multi-agent systems contain $N$ agents which exchange the information among a time-varying directed communication network $\mathcal{G}^\sigma$. The overall goal of this paper is to realize the state tracking of each agent to the reference signal $x_0$ under the switching topologies, i.e., achieve $\lim_{t\to\infty}\|x_i(t)-x_0(t)\|=0$ with
\begin{align}
\dot{x}_0(t)=Ax_0+f(x_0).
\end{align}
The trajectories of $x_0(t)$ are assumed to be unbounded.

It is assumed that only a part of agents can use the state information of the reference signal. Such nodes are the so-called pinning nodes. For saving of convenience, we define $\iota_i^\sigma=1$ if $i$ is a pinning node at time $t$. By regarding the reference signal as a virtual leader, all the pinning nodes can be cast as neighbors of the virtual leader. Let $\mathcal{E}_\upsilon^\sigma$ be the set of arcs associated with virtual leader, and denote $\bar{\mathcal{E}}^\sigma=\mathcal{E}^\sigma\cup\mathcal{E}_\upsilon^\sigma$, where $\cup$ is the union operation.

To further formulate the network considering in this work, we give the following assumption.
\begin{assumption}\label{assum-n}
	Consider a communication network including all followers and a leader (virtual reference system). We assume that:\\ {\color{blue}
	1)~The communication network is real-time disconnected and jointly connected, i.e., the union of all possible topologies $\bar{\mathcal{E}}_\cup=\bigcup_{\sigma(t)\in\mathcal{P}}\bar{\mathcal{E}}^\sigma$ contains a spanning tree rooted at the leader but for any $\sigma(t)\in\mathcal{P}$, $\bar{\mathcal{E}}^\sigma$ does not contain a spanning tree; \\
	2)~For every communication link in $\bar{\mathcal{E}}_\cup$, the time from when it becomes available (established) to when it is interrupted by a failure or an attack follows a negative exponential distribution with parameter $\lambda$;\\
	3)~	For every communication link in $\bar{\mathcal{E}}_\cup$, the length of time that the link remains interrupted (due to a failure or attack) follows a negative exponential distribution with parameter $\mu$.}
\end{assumption}


Note that in the above scenario, a spanning tree may never exist at any time. Taking this worst‑case perspective, the main problem of this paper is: how to design a pinning controller to achieve synchronization under switching networks that lack a spanning tree at every instant, especially when the leader system is open‑loop unstable?

Before solving this main problem, it is necessary to give some definitions, lemmas, and assumptions.
\begin{definition}
	Let $A\in\mathbb{R}^{n\times n}$ be a matrix whose off-diagonal elements are non-positive. Then, matrix $A$ is called a M-matrix if all its principal minors are non-negative. Moreover, we call $A$ a non-singular M-matrix if all its principal minors are positive.
\end{definition}

\begin{lemma}[\cite{Cooperative2009Qu}]\label{M1}
	The matrix $A\in\mathbb{R}^{n\times n}$ is a non-singular M-matrix if and only if there is a positive diagonal matrix $\Theta$ such that $\Theta A+A^T\Theta>\varepsilon I_n$ where $\varepsilon$ is a positive constant.
\end{lemma}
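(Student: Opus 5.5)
The plan is to prove the two directions separately. The bridge between them is the classical fact that a Z-matrix (nonpositive off-diagonal entries) whose eigenvalues all have positive real part has positive determinant. Throughout, $A$ is a Z-matrix as in the preceding Definition.

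\emph{Sufficiency.} Suppose $\Theta A+A^T\Theta>\varepsilon I_n$ for some positive diagonal $\Theta$.
\begin{itemize}
\item Fix any index set $J\subseteq\{1,\ldots,n\}$, and let $A_J$ and $\Theta_J$ be the corresponding principal submatrices.
\item A principal submatrix of a positive definite matrix is positive definite. Since $\Theta$ is diagonal, this gives $\Theta_J A_J+A_J^T\Theta_J>\varepsilon I_{|J|}$.
\item Read this as a Lyapunov inequality $P(-A_J)+(-A_J)^TP<0$ with $P=\Theta_J>0$. Lyapunov's theorem then says $-A_J$ is Hurwitz, so every eigenvalue of $A_J$ has positive real part.
\item Because $A_J$ is real, its non-real eigenvalues occur in conjugate pairs, and each pair contributes $|\lambda|^2>0$ to the determinant. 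The real eigenvalues are positive.
\item Hence $\det A_J>0$ for every $J$, so all principal minors are positive and $A$ is a non-singular M-matrix.
\end{itemize}

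\emph{Necessity.} Suppose $A$ is a non-singular M-matrix.
\begin{itemize}
\item First establish that $A^{-1}\ge 0$ entrywise. This is the standard equivalence between positive principal minors and inverse-positivity for Z-matrices. I would prove it by induction on $n$ using the Schur complement. Partition
$A=\begin{bmatrix} a & b^T\\ c & A_{22}\end{bmatrix}$ with $a>0$ and $b,c\le 0$. The Schur complement $A_{22}-cb^T/a$ is again a Z-matrix. Its principal minors equal the corresponding principal minors of $A$ that contain index $1$, divided by $a$, so they are positive. The block-inverse formula then gives $A^{-1}\ge0$. Alternatively, one can simply cite Berman--Plemmons, as the paper cites \cite{Cooperative2009Qu}.
\item Set $x=A^{-1}1_n$ and $y=A^{-T}1_n$. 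Both vectors are nonnegative. Neither has a zero entry, since a row of an invertible nonnegative matrix cannot vanish. Hence $x>0$, $y>0$, $Ax=1_n$ and $A^Ty=1_n$.
\item Choose $\Theta=diag\{y_1/x_1,\ldots,y_n/x_n\}$, so that $\Theta x=y$.
\item Let $M=\Theta A+A^T\Theta$. It is a symmetric Z-matrix, because $\Theta$ is positive diagonal. Moreover
$Mx=\Theta Ax+A^T\Theta x=\Theta 1_n+A^Ty=\Theta1_n+1_n>0$.
\item Let $D=diag\{x\}$. Then $D^{-1}MD$ is a Z-matrix with strictly positive row sums, hence strictly row diagonally dominant with positive diagonal. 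Gershgorin's theorem places every eigenvalue of $D^{-1}MD$, and therefore of the similar matrix $M$, in the open right half-plane.
\item Since $M$ is symmetric, this makes $M$ positive definite. Taking $\varepsilon=\tfrac12\underline{\lambda}(M)$ yields $\Theta A+A^T\Theta>\varepsilon I_n$.
\end{itemize}

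\emph{Main obstacle.} The only nonroutine step is the passage from the principal-minor definition to $A^{-1}\ge0$ in the necessity direction. That is where the Schur-complement induction must be carried out carefully, or replaced by a citation. Everything else is Lyapunov's theorem, Gershgorin's theorem and elementary bookkeeping.
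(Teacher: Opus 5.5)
Your argument is correct. There is no in-paper proof to compare it with: the paper only cites Qu's monograph (Chapter~4) and refers the reader there.

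What you give is a self-contained version of the standard textbook argument. Sufficiency is the Lyapunov eigenvalue argument applied to every principal submatrix. Necessity is inverse-positivity followed by the explicit scaling $\Theta=\mathrm{diag}\{y_i/x_i\}$, with $x=A^{-1}1_n$ and $y=A^{-T}1_n$. The individual steps all check out:
\begin{itemize}
\item the Schur complement remains a Z-matrix because $c_ib_j\ge 0$;
\item its principal minors are those of $A$ containing index $1$, divided by $a$;
\item every block of the inverse is entrywise nonnegative;
\item Gershgorin applies after the diagonal similarity by $\mathrm{diag}\{x\}$.
\end{itemize}

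Compared with the bare citation, your version adds two things.

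First, it makes explicit the Z-matrix hypothesis, which the lemma only inherits implicitly from the preceding Definition. Without it the ``if'' direction is false. For example, $A=\begin{bmatrix}1&1\\-1&1\end{bmatrix}$ gives $A+A^T=2I_2$ with $\Theta=I_2$, yet it is not an M-matrix because of its positive off-diagonal entry.

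Second, it produces a computable $\Theta$. After multiplication by a suitable positive constant, this gives exactly the kind of matrix $\Theta_\star^\sigma$ with $sym\{\Theta_\star^\sigma\mathcal{H}_\star^\sigma\}>\kappa_*I$ that Theorem~\ref{thm2} assumes without construction.

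The citation buys brevity, at the cost of leaving these points to the reader.
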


\begin{lemma}[\cite{Cooperative2009Qu}]\label{M2}
	Let $A\in\mathbb{R}^{n\times n}$ be a M-matrix. The matrix inequality $\Theta A+A^T\Theta\geq 0$ holds with a positive semi-definite solution $\Theta\geq 0$ if there exists a positive vector $x$ such that $Ax\geq 0$.  
\end{lemma}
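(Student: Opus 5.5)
The plan is a limiting argument built on Lemma~\ref{M1}. Note first that, read literally, the statement asks only for a positive semi-definite $\Theta$, so $\Theta=0$ satisfies it trivially. The content worth proving is that a \emph{nonzero} diagonal $\Theta\geq 0$ exists, and that is what I will aim for.

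\textbf{Step 1: perturbation.} For $\varepsilon>0$ set $A_\varepsilon=A+\varepsilon I_n$. Its off-diagonal entries are those of $A$, hence non-positive. Each principal minor of $A_\varepsilon$ of order $k$ expands as $\sum_{j} \varepsilon^{k-j} m_j$. Here $m_j$ is a sum of principal minors of $A$ of order $j$, so $m_j\geq 0$, and the $j=0$ term contributes $\varepsilon^k>0$. So every principal minor of $A_\varepsilon$ is positive, and $A_\varepsilon$ is a non-singular M-matrix. Lemma~\ref{M1} then gives a positive diagonal $\Theta_\varepsilon$ with
\begin{align*}
\Theta_\varepsilon A+A^T\Theta_\varepsilon+2\varepsilon\Theta_\varepsilon>0 .
\end{align*}
Since the inequality is homogeneous in $\Theta_\varepsilon$, I normalize to $\operatorname{tr}\Theta_\varepsilon=1$.

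\textbf{Step 2: compactness.} The normalized $\Theta_\varepsilon$ lie in the compact set of nonnegative diagonal matrices with unit trace. Take $\varepsilon_k\to 0$ and a subsequence with $\Theta_{\varepsilon_k}\to\Theta$. Then $\Theta$ is diagonal, $\Theta\geq 0$, and $\operatorname{tr}\Theta=1$, so $\Theta\neq 0$. Because $2\varepsilon_k\Theta_{\varepsilon_k}\to 0$ and the cone of positive semi-definite matrices is closed, passing to the limit yields $\Theta A+A^T\Theta\geq 0$.

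\textbf{Step 3: role of the vector $x$.} The argument above does not use the hypothesis $Ax\geq 0$ with $x>0$. Where that hypothesis matters is the sharper claim that $\Theta$ can be taken positive. Set $X=\operatorname{diag}(x)$. Then $\tilde A=X^{-1}AX$ has non-positive off-diagonal entries and nonnegative row sums, i.e.\ it is row diagonally dominant. If in addition there is $y>0$ with $y^TA\geq 0$, the standard choice $\Theta=\operatorname{diag}(y_i/x_i)$ works. Indeed, $X(\Theta A+A^T\Theta)X$ then has nonnegative row sums (from $Ax\geq 0$) and nonnegative column sums (from $y^TA\geq 0$), and it is symmetric with non-positive off-diagonal entries. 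It is therefore diagonally dominant and hence $\geq 0$ by Gershgorin's theorem.

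\textbf{Main obstacle.} With only the one-sided condition $Ax\geq 0$, the difficulty is securing a left positive vector. I would first try to take $y$ to be a nonnegative left null or Perron-type vector of $A$, obtained as a limit of $A_\varepsilon^{-T}\mathbf 1_n$ after normalization. If that vector fails to be strictly positive, I would fall back on the nonzero semi-definite $\Theta$ from Step 2, which is all the lemma as stated requires.
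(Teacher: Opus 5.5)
Your Steps 1--2 are correct and already prove the lemma. In fact they prove more: every M-matrix, whether or not a positive $x$ with $Ax\ge 0$ exists, admits a nonzero diagonal $\Theta\ge 0$ with $\Theta A+A^T\Theta\ge 0$. You are also right that the literal statement is met by $\Theta=0$.

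The paper gives no proof of its own; it defers to Qu's monograph, so there is nothing to match step by step. The classical alternative is the explicit construction of your Step 3, and it goes through without your ``main obstacle'': the diagonal-dominance computation never uses $y_i>0$. Suppose $y\ge 0$, $y\neq 0$, and $y^TA\ge 0$, and write $Y=\operatorname{diag}(y)$. Then $YAX+XA^TY$ still has off-diagonal entries $y_ia_{ij}x_j+x_ia_{ji}y_j\le 0$ and row sums $y_i(Ax)_i+x_i(A^Ty)_i\ge 0$. Hence $\Theta=\operatorname{diag}(y_i/x_i)$ is a nonzero positive semi-definite solution.

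Such a $y$ is exactly what your normalized limit of $A_\varepsilon^{-T}\mathbf 1_n$ delivers, since $A_\varepsilon^{-1}\ge 0$ and $A^Ty_\varepsilon\ge -\varepsilon y_\varepsilon$. It is also obtained from a Perron left eigenvector of $B$ when $A=sI-B$ is singular.

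The trade-off between the two routes is as follows. Your compactness argument needs only Lemma~\ref{M1} and never touches $x$, but it tells you nothing about which entries of $\Theta$ are positive. The $\operatorname{diag}(y_i/x_i)$ construction uses $x$ and gives an explicit formula. It also yields a positive $\Theta$ whenever a strictly positive left vector is available, for example when $A$ is nonsingular ($y=A^{-T}\mathbf 1_n$) or irreducible.

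Do not pursue the sharper claim in the form you state it, however. Under $Ax\ge 0$ alone, a positive \emph{diagonal} $\Theta$ need not exist. Take $A=\begin{pmatrix}1&-1\\0&0\end{pmatrix}$, the Laplacian of the digraph $2\to 1$, which is an M-matrix with $A\mathbf 1_2=0$. For $\Theta=\operatorname{diag}(a,b)$ you get $\Theta A+A^T\Theta=\begin{pmatrix}2a&-a\\-a&0\end{pmatrix}$, which is positive semi-definite only if $a=0$. So your search for a strictly positive left vector must fail in general.

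What the hypothesis on $x$ really buys is a positive definite, generally non-diagonal, $\Theta$. The matrix $X^{-1}AX$ is a Z-matrix with nonnegative row sums, so $e^{-X^{-1}AXt}$ is entrywise nonnegative with row sums at most one. Hence all solutions of $\dot z=-Az$ are bounded, and some positive definite $\Theta$ satisfies $\Theta A+A^T\Theta\ge 0$. In the example above, $\Theta=\begin{pmatrix}1&-1\\-1&2\end{pmatrix}$ gives $\Theta A+A^T\Theta=\begin{pmatrix}2&-2\\-2&2\end{pmatrix}$.

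Without such an $x$ this fails. For $A=\begin{pmatrix}0&-1\\0&0\end{pmatrix}$, every positive semi-definite solution $\Theta$ must have zero $(1,1)$ entry, so it is singular.
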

\begin{assumption}\label{assum1}
	Assume each element of the the vector function $f(x)$ is
	Lipschitz with constant $\rho$.
\end{assumption}

\begin{assumption}\label{assum2}
	The pair $(A,B)$ is controllable. 
\end{assumption}

\begin{remark}
	Lemma \ref{M1} and Lemma \ref{M2} are rephrased by several theorems in \cite[Chapter 4]{Cooperative2009Qu}. Readers could refer to this monograph for their proof. Assumption \ref{assum-n} describes the communication mode among nodes. Assumption \ref{assum1} and Assumption \ref{assum2} are mild because they are conventional assumptions in nonlinear multi-agent systems. 
\end{remark}


\begin{remark}\label{probability}
	\color{blue}Under Assumption~\ref{assum-n}, for any node $w\in\mathcal{V}$, the union graph $\bar{\mathcal{E}}_\cup$ (the fully healthy network) contains a directed path $P_w$ from the reference system $v$ to $w$. Let the arcs on $P_w$ be $p_1,\dots,p_{m_w}$. Each arc $p_i$ independently alternates between \textit{up} and \textit{down} states: the up duration follows a negative exponential distribution with parameter $\lambda$, and the down duration follows a negative exponential distribution with parameter $\mu$, with $\lambda,\mu>0$. For a single arc, the probability it stays up is $\frac{\mu}{\lambda+\mu}$. Because the arcs are independent, the probability of all arcs on $P_w$ are simultaneously up is $(\frac{\mu}{\lambda+\mu})^{m_w} > 0$. Hence, the path $P_w$ is fully operational for a strictly positive fraction of time. This conclusion is crucial for the proof of the final conclusions in Theorem \ref{thm3} and Theorem \ref{thm1} of this paper.

\end{remark}

\section{Synchronization under switching topologies with fixed coupling gain}\label{sec3}

This section will first propose a network transformation mapping as well as its function and properties. Then, we will show how network transformation mapping help achieve synchronization under the unbounded reference system and the real-time disconnected switching networks.

\subsection{Network transformation mapping}\label{sec3.1}

This subsection intends to give the design of control protocol with network transformation mapping in the situation where the communication network does not have a directed spanning tree at any time. This protocol is proven to treat the traditional method as a special case.

Designing the pinning control law as
\begin{align}
&u_i=\kappa_i\mathcal{K}\left(\sum_{j=1}^N\beta_{ij}^\sigma\left(x_j-x_i\right)+\iota_i^\sigma\left(x_0-x_i\right)\right),\label{u1}
\end{align}
where $\kappa_i$ is the coupling gain with respect to agent $i$; $\mathcal{K}$ is the feedback gain that will be designed later. Moreover, {\color{blue}$\beta_{ij}^\sigma$ is a modified element of the adjacency matrix as a function of $\sigma (t)$, which is calculated from the original agency matrix parameters $\alpha_{ij}^\sigma$ and the network transformation mapping parameters $\delta_{ij}^{\sigma}$, i.e.,
\begin{align}
&\beta_{ij}^\sigma=\alpha_{ij}^\sigma\delta_{ij}^\sigma,\label{b1}\\
&\delta_{ij}^\sigma=\ell_{i}^\sigma\wedge\ell_{j}^\sigma,\label{b2}
\end{align} 
where $\ell_{i}^\sigma=\bigvee_{k_j\in\mathcal{I}^\sigma}\ell_{ik_j}^\sigma$ and $\ell_{ik_j}^\sigma=1$ means there is a path pointing from $k_j$ to $i$ ($k_j$ belongs to an index set of pinning nodes $\mathcal{I}^\sigma\triangleq\{k_1,\ldots,k_{m^\sigma}\}$) and $\ell_{ik_j}^\sigma=0$ otherwise. }

{\color{blue}Now, we introduce how to compute the network transformation mapping parameter $\delta_{ij}^\sigma$. According to the knowledge of graph theory, there is a path pointing from $k_j$ to $i$ with $2$ hops if and only if there is $l\in\mathcal{V}$ such that $\alpha_{il}^\sigma=1$ and $\alpha_{lk_j}^\sigma=1$. Hence,  $\alpha_{ik_j}^\sigma(2)\triangleq\sum_{l=1}^N\alpha_{il}^\sigma\alpha_{lk_j}^\sigma>0$ means there is a least one path pointing from $k_j$ to $i$ with $2$ hops. By the similar analysis, one may obtain that $\alpha_{ik_j}^\sigma(h)$---the element of $(\mathcal{A}^\sigma)^h$---represents the existence of path from $k$ to $i$ with $h$ hops. Now, we denote $\bar{\ell}_{ik_j}^\sigma$ the element of $\sum_{l=1}^{N}(\mathcal{A}^\sigma)^{l-1}$. Then, $\bar{\ell}_{ik_j}^\sigma>0$ means there is a path pointing from $k_j$ to $i$ with at least $1$ hop and at most $N-1$ hops. For convenience, we construct $\ell_{i}^\sigma=\bar{\ell}_{ik}^\sigma\wedge 1$, and hence $\delta_{ij}^\sigma$ can be calculated by (\ref{b2}).} Herein, we provide an example to help readers understand how to calculate the network transformation mapping.

{\color{cyan}Consider the original communication network shown in Fig. \ref{calexample}a, which consists of a leader (virtual reference system, labeled by $0$) and four follower nodes. First, each node computes its reachability indicator \(\ell_{i}\): \(\ell_{i}=1\) if there exists a directed path from pinning node to node \(i\); otherwise \(\ell_i=0\). In Fig. \ref{calexample}a, node 1 is a pinning node, so \(\ell_1=1\); node 2 can be reached via \(1\to 2\), so \(\ell_2=1\); nodes 3 and 4 have no path from any pinning node, so \(\ell_{3}=0,\ \ell_{4}=0\). Then, $\delta_{21}=\ell_{2}\wedge\ell_{1}=1\wedge 1=1$, so $\beta_{21}=\alpha_{21}\delta_{21}=1$. Similarly, we can calculate $\beta_{23}=\alpha_{23}\delta_{23}=1\wedge 0=0$ and $\beta_{43}=\alpha_{43}\delta_{43}=1\wedge 0=0$. Therefore, the transformed network is shown in Fig. \ref{calexample}b.}

\begin{figure}[!t]
	\centering
	\includegraphics[width=4cm]{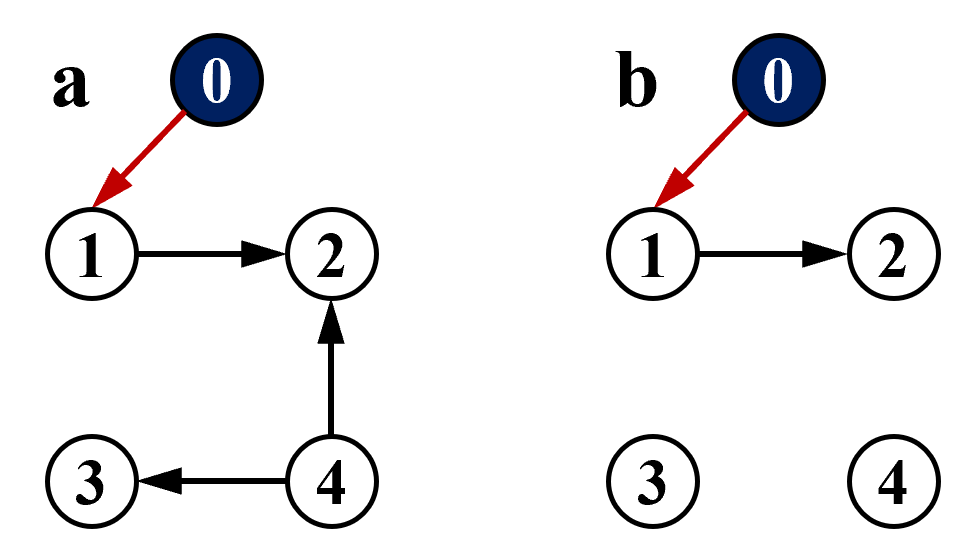}\\
	\caption{Switching networks for counterexample}\label{calexample}
\end{figure}

The rationale behind network transformation mapping (\ref{b1})--(\ref{b2}) is as follows. When agent $i$ becomes a descendant of the reference system at a specific moment, its error dynamics relative to the reference system are theoretically expected to exhibit a convergent trend. However, if agent $i$ simultaneously receives information of agent $j$ (which is not a descendant of the reference system so has a divergent trend), then the tracking error of agent $j$ will be transmitted to agent $i$, resulting in the tracking error of agent $i$ also becoming divergent. This means that even though agent $i$ indirectly accesses reference information, its tracking error still fails to have convergence trend. To address this issue, the proposed network transformation mapping (\ref{b1})--(\ref{b2}) is employed to help each agent dynamically evaluates the topological relationships among agents and selectively filters neighbor information. Specifically, it ensures that each agent only incorporates information from neighbors that belong to the reference system's descendant subgraph. This avoids the agent from being unable to track the reference system due to receiving incorrect information.
\begin{figure}[!t]
	\centering
	\includegraphics[width=8cm]{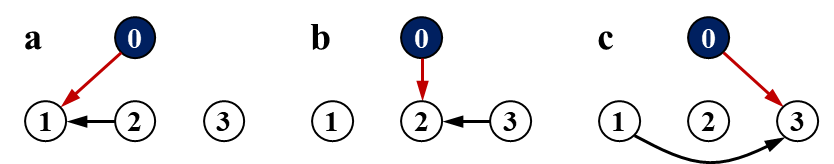}\\
	\caption{Switching networks for counterexample}\label{counterexample}
\end{figure}

{\color{red}Now, we will illustrate with a simple example that without a network transformation mapping, there may be a situation where all nodes are unable to track the leader forever (even if they can all obtain information about the leader at some point in time). Consider a switching sequence that cycles among three topologies shown in Fig. \ref{counterexample}, with equal dwell time. When the network is Fig. \ref{counterexample}a, Agent 1 can receive the leader's information but is simultaneously disturbed by erroneous information from Agent 2; thus, it cannot track the leader during this interval. When the network switches to Fig. \ref{counterexample}b, Agent 2 can receive the leader's information but is disturbed by Agent 3; hence, Agent 2 cannot track the leader during that interval. Moreover, because the leader's state is unbounded, Agent 1 tends to diverge further during this interval. When the network switches to Fig. \ref{counterexample}c, the erroneous information from Agent 1 prevents Agent 3 (which otherwise has access to the leader) from tracking the leader. As a result, although this switching sequence is a special case of the negative exponential distribution and satisfies Assumption 1, the system fails to achieve synchronization due to the absence of the network transformation mapping to prevent contamination by erroneous information.}

Now, we introduce the functions of (\ref{b1}): it can decompose the communication network into two separated subgraphs (Lemma \ref{trans}).

\begin{lemma}\label{trans}
	The network corresponding to adjacency matrix $\mathcal{B}^\sigma=[\beta_{ij}^\sigma]_{i,j=1}^N$ {\color{blue}can be} divided into two independent categories $\mathcal{G}_\star^\sigma$ and $\mathcal{G}_\diamond^\sigma$, where subgraph $\mathcal{G}_\star^\sigma$ contains all the pinning nodes belonging to $\mathcal{I}^\sigma$ and their descendants, and $\mathcal{G}_\diamond$ includes the other nodes. Moreover, the transformation leaves the arcs between the pair belonging to $\mathcal{G}_\star^\sigma$ as {\color{blue}they are}, and removes the arcs belonging to $\mathcal{G}_\diamond^\sigma$ as well as the arcs among $\mathcal{G}_\star^\sigma$ and $\mathcal{G}_\diamond^\sigma$. 
\end{lemma}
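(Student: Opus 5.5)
The argument is a direct case analysis on the reachability indicators $\ell_i^\sigma$ at a fixed time $t$ (so $\sigma(t)$ is frozen and the graph is static). Define $\mathcal{V}_\star^\sigma=\{i:\ell_i^\sigma=1\}$ and $\mathcal{V}_\diamond^\sigma=\{i:\ell_i^\sigma=0\}$. These sets partition $\mathcal{V}$ because $\ell_i^\sigma\in\{0,1\}$ by the truncation $\ell_i^\sigma=\bar{\ell}_{ik}^\sigma\wedge 1$.

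\textbf{Identifying the sets.} First I will check that $\mathcal{V}_\star^\sigma$ is exactly the set of pinning nodes together with their descendants. For a pinning node $k_j\in\mathcal{I}^\sigma$, the $l=1$ term of $\sum_{l=1}^N(\mathcal{A}^\sigma)^{l-1}$ is the identity, which gives $\bar{\ell}_{k_jk_j}^\sigma\ge 1$ and hence $\ell_{k_j}^\sigma=1$. For a non-pinning node $i$, the standard fact that $[(\mathcal{A}^\sigma)^h]_{ik_j}>0$ if and only if there is a walk of length $h$ from $k_j$ to $i$ applies. Any walk can be shortened to a path of length at most $N-1$. Therefore $\bar{\ell}_{ik_j}^\sigma>0$ holds exactly when some pinning node has a directed path to $i$, which means $\ell_i^\sigma=\bigvee_j\ell_{ik_j}^\sigma=1$ precisely on the descendants. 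The complement $\mathcal{V}_\diamond^\sigma$ is then "the other nodes."

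\textbf{Which arcs survive.} By (\ref{b1})--(\ref{b2}), $\beta_{ij}^\sigma=\alpha_{ij}^\sigma(\ell_i^\sigma\wedge\ell_j^\sigma)$. This gives three cases:
\begin{itemize}
\item If $i,j\in\mathcal{V}_\star^\sigma$, then $\delta_{ij}^\sigma=1$ and $\beta_{ij}^\sigma=\alpha_{ij}^\sigma$, so the arcs inside $\mathcal{G}_\star^\sigma$ are unchanged.
\item If $i,j\in\mathcal{V}_\diamond^\sigma$, then $\delta_{ij}^\sigma=0$, so every arc inside $\mathcal{G}_\diamond^\sigma$ is removed.
\item If exactly one endpoint lies in $\mathcal{V}_\diamond^\sigma$, then $\ell_i^\sigma\wedge\ell_j^\sigma=0$, so every cross arc is removed in both directions.
\end{itemize}
It follows that after relabeling nodes as $(\mathcal{V}_\star^\sigma,\mathcal{V}_\diamond^\sigma)$ the matrix $\mathcal{B}^\sigma$ is block diagonal, $\operatorname{diag}\{\mathcal{A}_\star^\sigma,0\}$, where $\mathcal{A}_\star^\sigma$ is the original adjacency matrix restricted to $\mathcal{V}_\star^\sigma$. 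The pinning terms $\iota_i^\sigma$ are nonzero only on $\mathcal{V}_\star^\sigma$. Together these show the two subgraphs are independent.

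\textbf{Main subtlety.} The step needing care is consistency. Cutting the cross arcs must not change which nodes are reachable, since otherwise the label "$\mathcal{G}_\star^\sigma$ contains the descendants" would be circular. I will argue that any path from a pinning node to $i$ in the original graph passes only through nodes that are themselves descendants, so every arc on it lies inside $\mathcal{V}_\star^\sigma$ and is kept. Reachability of each node in $\mathcal{V}_\star^\sigma$ is therefore preserved in the transformed graph. A node in $\mathcal{V}_\diamond^\sigma$ had no such path to begin with, so removing arcs cannot create one. This shows the decomposition is well defined.
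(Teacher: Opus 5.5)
Your proposal is correct and follows the same route as the paper's proof. Both partition $\mathcal{V}$ by the value of $\ell_i^\sigma$ and then run the three-case analysis of $\delta_{ij}^\sigma=\ell_i^\sigma\wedge\ell_j^\sigma$. Your extra steps make explicit what the paper asserts ``by construction'': the identity term of $\sum_{l=1}^N(\mathcal{A}^\sigma)^{l-1}$ places the pinning nodes themselves in $\mathcal{V}_\star^\sigma$, walks shorten to paths, and every path from a pinning node stays inside $\mathcal{V}_\star^\sigma$, so reachability survives the cut.
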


\begin{IEEEproof}
	See in Appendix \ref{app-L3}.
\end{IEEEproof}
{\color{red}Moreover, from the definition of the network transformation mapping, the following properties hold. (i) Uniqueness: For a given original graph $\mathcal{G}^\sigma$ and pinning node set $\mathcal{I}^\sigma$, the transformed adjacency matrix $\mathcal{B}^\sigma=[\beta_{ij}^\sigma]_{i,j=1}^N$ is uniquely determined, because $\ell_{ik}^\sigma$ is uniquely given by reachability and $\delta_{ij}^\sigma = \ell_i^\sigma \wedge \ell_j^\sigma$ is a deterministic logical operation. (ii) Invariance under switching: The network transformation mapping is applied independently at each switching instant, and the decomposition is always well-defined. (iii) Relation to maximal reachable subgraph: The subgraph $\mathcal{G}_\star^\sigma$ induced by the nodes with $\ell_{ik}^\sigma = 1$ is exactly the maximal reachable subgraph of the original graph with respect to the pinning node set, and it retains all original arcs among these nodes; the complementary subgraph $\mathcal{G}_\diamond^\sigma$ contains the remaining nodes and has no arcs. Hence, the network transformation mapping decomposes the original graph into the maximal reachable subgraph and a set of isolated nodes.}

Now, some notations based on the transformed network should be given. The details of two subgraphs are noted as $\mathcal{G}_\star^\sigma=\{\mathcal{V}_\star^\sigma,\mathcal{E}_\star^\sigma\}$ and $\mathcal{G}_\diamond^\sigma=\{\mathcal{V}_\diamond^\sigma, \mathcal{E}_\diamond^\sigma\}$ with $\mathcal{V}_\star^\sigma,  \mathcal{V}_\diamond^\sigma, \mathcal{E}_\star^\sigma, \mathcal{E}_\diamond^\sigma$ being nodes set and arcs set respectively. Let $\mathcal{B}_{\star}^\sigma$ and $\mathcal{B}_\diamond^\sigma$ with respect to $\beta_{ij}^\sigma$ be their adjacency matrices, while the corresponding Laplacian matrices are defined as $\mathcal{L}_\star^\sigma$ and $\mathcal{L}_\diamond^\sigma$. Since all the pinning nodes belong to subgraph $\mathcal{G}_\star^\sigma$, the pinning matrix can be designed as $\mathcal{S}_\star^\sigma=diag\{\iota_i^\sigma,~i\in\mathcal{V}_\star^\sigma\}$.

At the end of this subsection, it is important to point out that the calculation of the transformed network requires the agent to know the adjacency matrix of the original network. It is a mild requirement because the eigenvalue of the Laplacian matrix is often used when calculating the coupling gain of multi-agent systems. And the calculation of its eigenvalue also needs to know the information of the whole adjacency matrix. In addition, Algorithm \ref{alg1} gives a completely distributed method to calculate $\delta_{ij}^\sigma$, in which each node only needs the information about itself and its neighbor nodes to obtain $\delta_{ij}^\sigma$. {\color{blue}In this algorithm, $\alpha_{ij}^\sigma[\iota]$ represents the reachability from node $j$ to node $i$ with $\iota$ hops.} For example, $\alpha_{ij}^\sigma[\iota]=1$ means there is a path pointing from $j$ to $i$ with $\iota$ hops. 

\begin{algorithm}
	\caption{Completely distributed calculation method of $\delta_{ij}^\sigma$} 
	\label{alg1}
	\begin{algorithmic}[1]
		\REQUIRE Node $i$, and the information of $\alpha_{ij}^\sigma$. 
		\ENSURE $\delta_{ij}^\sigma$.
		\STATE Set $\iota=1$, and {\color{blue}$\alpha_{ij}^\sigma[1]=\alpha_{ij}^\sigma$}.
		\STATE Find a set $\mathcal{N}_i[1]=\{j~|~\alpha_{ij}^\sigma[1]=1\}$;
		\STATE Calculate 
		$\ell_{i{k}}^\sigma=\bigvee_{j\in\mathcal{I}}\alpha_{ij}^\sigma[1]$;
		\WHILE {$\ell_{i{k}}^\sigma \neq 1$}
		\STATE Let $\iota=\iota+1$;
		\STATE Calculate $$\color{blue}\alpha_{ij}^\sigma[\iota]=\bigvee_{l\in\{i\}\cup\mathcal{N}_{i}[1]}\alpha_{il}^\sigma[1]\alpha_{lj}^\sigma[\iota-1];$$
		\STATE Calculate $\ell_{i{k}}^\sigma=\bigvee_{j\in\mathcal{I}}\alpha_{ij}^\sigma[\iota]$;
		\ENDWHILE
		\STATE Calculate $\delta_{ij}^\sigma$ by (\ref{b2}).
	\end{algorithmic}
\end{algorithm}



\subsection{Stability of the synchronization protocol}\label{sec3.2}

This subsection will first focus on the convergence rate of $\mathcal{V}_\star^\sigma$ concerning coupling gain. Then, based on the estimation of the measure of convergence time interval, we will design control parameters such that the convergence amplitude of the agent during the convergence interval is greater than the divergence amplitude during the divergence interval. This means the sufficient conditions for guaranteeing the synchronization under switching topologies will be proved. Now, one may state the following.

\begin{theorem}\label{thm2}
	Given the multi-agent systems (\ref{sys}) with control law (\ref{u1}) subjected to Assumption \ref{assum1} and \ref{assum2}. {\color{blue}Consider any time interval over which the switching signal \(\sigma(t)\) is a constant, i.e., no topology change occurs during this interval.} Then, the convergence rate of synchronization of all nodes in $\mathcal{V}_\star^\sigma$ is at least
	\begin{align}\label{rela}
	\wp_\star=&\frac{2}{\bar{\kappa}_\star^\sigma\bar{\theta}_\star^\sigma}\left(-\bar{\kappa}_\star^\sigma\underline{\kappa}_\star^\sigma\underline{\theta}_\star^\sigma+\bar{\kappa}_\star^\sigma\left(\bar{\theta}_\star^\sigma\right)^2\right)\notag\\
	&+4\sqrt{N}\rho\bar{\lambda}(P)\bar{\lambda}(\mathcal{H}_\star^\sigma)\underline{\lambda}(\mathcal{H}_\star^\sigma),
	\end{align}
	where $\bar{\kappa}_\star^\sigma=\max_i\{\kappa_i,~i\in\mathcal{V}_\star^\sigma\}$, $\underline{\kappa}_\star^\sigma=\min\{\kappa_i,~i\in\mathcal{V}_\star^\sigma\}$; $\bar{\theta}_\star^\sigma=\max\{\theta_i^\sigma,~i\in\mathcal{V}_\star^\sigma\}$, $\underline{\theta}_\star^\sigma=\min\{\theta_i^\sigma,~i\in\mathcal{V}_\star^\sigma\}$ with $\theta_i^\sigma$ being the $i$th diagonal element of $\Theta_\star^\sigma$; $\Theta_\star^\sigma$ is chosen such that $sym\{\Theta_\star^\sigma\mathcal{H}_\star^\sigma\}$ is positive definite diagonal matrix; $\kappa_*$ is a positive constant and $\mathcal{H}_\star^\sigma=\mathcal{L}_\star^\sigma+\mathcal{S}_\star^\sigma$; $P$ is a positive definite matrix solved by
		\begin{align}
			P(A-\kappa_*BK)+(A-\kappa_*BK)^TP=-\kappa_*I_n,
		\end{align}
		and $K$ is chosen such that $A-\kappa_*BK$ is a Hurwitz matrix.
\end{theorem}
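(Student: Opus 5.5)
Fix an interval $[t_k,t_{k+1})$ on which $\sigma(t)\equiv\sigma$ is constant. The approach is a weighted quadratic Lyapunov function on the tracking errors of the reachable nodes only. By Lemma \ref{trans}, the transformed graph $\mathcal{B}^\sigma$ splits into $\mathcal{G}_\star^\sigma$ and $\mathcal{G}_\diamond^\sigma$, with no arc from $\mathcal{V}_\diamond^\sigma$ into $\mathcal{V}_\star^\sigma$. Hence the error $e_i=x_i-x_0$, $i\in\mathcal{V}_\star^\sigma$, evolves autonomously, driven only by other reachable errors and the pinning terms.

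Stacking $e_\star=col\{e_i,\,i\in\mathcal{V}_\star^\sigma\}$ and writing $\mathcal{K}_\star=diag\{\kappa_i,\,i\in\mathcal{V}_\star^\sigma\}$, I would first derive from (\ref{sys}) and (\ref{u1})
\begin{align*}
\dot e_\star=(I\otimes A)e_\star+F(e_\star)-\left(\mathcal{K}_\star\mathcal{H}_\star^\sigma\otimes B\mathcal{K}\right)e_\star,
\end{align*}
where $F$ collects the terms $f(x_i)-f(x_0)$. Since every node of $\mathcal{G}_\star^\sigma$ is reachable from some pinning node, $\mathcal{H}_\star^\sigma=\mathcal{L}_\star^\sigma+\mathcal{S}_\star^\sigma$ is a nonsingular M-matrix. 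Lemma \ref{M1} then gives a positive diagonal $\Theta_\star^\sigma$ with $sym\{\Theta_\star^\sigma\mathcal{H}_\star^\sigma\}>\kappa_*I$.

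I would take $\mathcal{K}=K$ (up to scaling) and set
\begin{align*}
V=e_\star^T\left(\Theta_\star^\sigma\mathcal{K}_\star^{-1}\otimes P\right)e_\star .
\end{align*}
The diagonal weight $\mathcal{K}_\star^{-1}$ cancels the heterogeneous coupling gains in the coupling term, leaving $-e_\star^T(sym\{\Theta_\star^\sigma\mathcal{H}_\star^\sigma\}\otimes PBK)e_\star$. The derivative $\dot V$ then has three groups of terms.
\begin{enumerate}
\item The drift term $e_\star^T(\Theta_\star^\sigma\mathcal{K}_\star^{-1}\otimes sym\{PA\})e_\star$.
\item The coupling term above. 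Using $sym\{\Theta\mathcal{H}\}>\kappa_*I$, I would combine it with the drift term through the Riccati/Lyapunov identity $P(A-\kappa_*BK)+(\cdot)^TP=-\kappa_*I$. To do so, bound $\Theta\mathcal{K}^{-1}$ between $\underline\theta/\bar\kappa$ and $\bar\theta/\underline\kappa$, choosing $K$ with $PBK$ symmetric positive semidefinite, e.g.\ $K=B^TP$. This step produces the $-\bar\kappa\underline\kappa\underline\theta+\bar\kappa\bar\theta^2$ type terms after normalizing by $\bar\kappa\bar\theta$.
\item The nonlinear term. By Assumption \ref{assum1}, $\|f(x_i)-f(x_0)\|\le\sqrt{n}\rho\|e_i\|$, and it is bounded by $2\sqrt N\rho\bar\lambda(P)\|\Theta\mathcal K^{-1}\|\,\|e_\star\|^2$. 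Converting the $\Theta$-norms via the eigenvalues of $\mathcal{H}_\star^\sigma$ yields the last term of (\ref{rela}).
\end{enumerate}
Finally, I would use $\underline{\lambda}(\Theta\mathcal K^{-1}\otimes P)\|e_\star\|^2\le V\le\bar\lambda(\cdot)\|e_\star\|^2$ to turn $\dot V\le -c\|e_\star\|^2$ into $\dot V\le-\wp_\star V$. This gives exponential convergence with rate $\wp_\star$ on the interval.

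The main obstacle is step 2, the heterogeneous gains. $\mathcal{K}_\star\mathcal{H}_\star^\sigma$ is not symmetrizable by $\Theta_\star^\sigma$ directly, and the identity for $P$ involves only the single constant $\kappa_*$. I would first try the $\mathcal{K}_\star^{-1}$-weighted Lyapunov function above. If the cross terms do not close, the fallback is to bound $\Theta\mathcal{K}\mathcal{H}$ crudely by $\underline\kappa\,sym\{\Theta\mathcal H\}$ minus a mismatch of size $(\bar\kappa-\underline\kappa)\bar\theta\|\mathcal H\|$. This is what I expect generates the specific max/min combination in (\ref{rela}), and sign bookkeeping there (so that $\wp_\star>0$ for large $\underline\kappa_\star^\sigma$) must be checked carefully.
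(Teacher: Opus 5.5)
Your setup is fine: the decoupling of $e_\star$ via Lemma~\ref{trans} is correct, and so is your observation that $\mathcal{H}_\star^\sigma$ is a nonsingular M-matrix, which the paper takes for granted. But steps 2 and 3 have a genuine gap. The Lyapunov function $V=e_\star^T(\Theta_\star^\sigma(\Upsilon_\star^\sigma)^{-1}\otimes P)e_\star$ cannot produce (\ref{rela}), and you assert rather than derive that it does. (Your $\mathcal{K}_\star$ is the paper's $\Upsilon_\star^\sigma$; the name clashes with the feedback gain $\mathcal{K}$.) With this weight the coupling term is gain-free, namely $-e_\star^T(sym\{\Theta_\star^\sigma\mathcal{H}_\star^\sigma\}\otimes PB\mathcal{K})e_\star$. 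The drift term, the Lipschitz term and $V$ itself carry the ratios $\theta_i^\sigma/\kappa_i$. So the natural estimate involves only those ratios, never the product $\kappa_i\theta_i^\sigma$ and the leftover $(\bar\theta_\star^\sigma)^2$ from which (\ref{rela}) is built.

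Concretely, suppose you close the estimate with $\mathcal{K}=B^TP$. This is consistent only if $P$ solves the Riccati equation $sym\{PA\}-2\kappa_*PBB^TP=-\kappa_*I_n$, which restricts the hypothesis that $K$ is any gain making $A-\kappa_*BK$ Hurwitz. You then need e.g.\ $\kappa_i\ge 2\theta_i^\sigma$ to absorb $2\kappa_*(\theta_i^\sigma/\kappa_i)PBB^TP$, and you get $\dot V\le-(\kappa_*-2\sqrt{n}\rho\bar{\lambda}(P))\,e_\star^T(\Theta_\star^\sigma(\Upsilon_\star^\sigma)^{-1}\otimes I_n)e_\star$. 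This is a valid estimate when $\kappa_*>2\sqrt{n}\rho\bar\lambda(P)$, but its decay rate for $V$, $(\kappa_*-2\sqrt{n}\rho\bar{\lambda}(P))/\bar{\lambda}(P)$, does not depend on the $\kappa_i$ at all. Likewise, in $e$-coordinates the nonlinear term contains no $\mathcal{H}_\star^\sigma$, so ``converting via the eigenvalues of $\mathcal{H}_\star^\sigma$'' has nothing to act on. Your fallback mismatch term $(\bar\kappa-\underline\kappa)\bar\theta\|\mathcal{H}\|$ does not occur in (\ref{rela}) either.

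The missing ingredients are the paper's change of coordinates and a completion of squares. The paper works with $\xi_\star=(\mathcal{H}_\star^\sigma\otimes I_n)e_\star$, whose closed loop is $(I\otimes A)-(\mathcal{H}_\star^\sigma\Upsilon_\star^\sigma\otimes B\mathcal{K})$, with the gains on the right. It takes $V_1=\xi_\star^T(\Upsilon_\star^\sigma\Theta_\star^\sigma\otimes P)\xi_\star$ and $\mathcal{K}=KK^TB^TP$. Then $\Upsilon_\star^\sigma\Theta_\star^\sigma\mathcal{H}_\star^\sigma\Upsilon_\star^\sigma$ is a congruence of $\Theta_\star^\sigma\mathcal{H}_\star^\sigma$; this is exactly how the symmetrization obstacle you flagged is resolved. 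The coupling is bounded by $-\kappa_*\xi_\star^T((\Upsilon_\star^\sigma)^2\otimes PBKK^TB^TP)\xi_\star$, and $PB\mathcal{K}$ is symmetric positive semidefinite for any stabilizing $K$, with no Riccati equation needed.

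The key step is $\kappa_i^2PBKK^TB^TP+(\theta_i^\sigma)^2I_n\ge\kappa_i\theta_i^\sigma sym\{PBK\}$, i.e.\ $(\kappa_iPBK-\theta_i^\sigma I_n)(\kappa_iPBK-\theta_i^\sigma I_n)^T\ge 0$. It turns the coupling, which is quadratic in the gains, into weight $\kappa_i\theta_i^\sigma$, so that $sym\{P(A-\kappa_*BK)\}=-\kappa_*I_n$ applies blockwise. The price is adding back $\kappa_*\xi_\star^T((\Theta_\star^\sigma)^2\otimes I_n)\xi_\star$. This is precisely the origin of $-\kappa_*\underline{\kappa}_\star^\sigma\underline{\theta}_\star^\sigma+\kappa_*(\bar{\theta}_\star^\sigma)^2$ in (\ref{t1dv1}). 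The $\mathcal{H}_\star^\sigma$-factor in the last term of (\ref{rela}) arises because the nonlinearity enters as $(\mathcal{H}_\star^\sigma\otimes I_n)\tilde{f}_\star$ and $\|e_\star\|$ must be bounded back by $\|\xi_\star\|$. Properly that factor is a ratio such as $\|\mathcal{H}_\star^\sigma\|\|(\mathcal{H}_\star^\sigma)^{-1}\|$, not the product printed in (\ref{t1dv2}).

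Your caution about signs is also warranted. With $\kappa_*=\bar{\kappa}_\star^\sigma$, which is the normalization that matches (\ref{rela}), (\ref{V2}) says exactly $\dot V_1\le\frac{1}{2}\bar{\kappa}_\star^\sigma\bar{\theta}_\star^\sigma\wp_\star\|\xi_\star\|^2$. So the printed $\wp_\star$ is negative precisely when the estimate gives decay, and the paper's next line flips this sign. You should not expect $\wp_\star>0$ for large $\underline{\kappa}_\star^\sigma$ from (\ref{rela}) as written.
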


\begin{IEEEproof}
See in Appendix \ref{app-Th1}.
\end{IEEEproof}

This theorem pointed out the convergence rate of synchronization for the agents in $\mathcal{V}_\star^\sigma$. However, the agents belonging to $\mathcal{V}_\diamond^\sigma$ have no access to the information of pinning nodes. Hence, the performance of their error dynamics depends on the eigenvalues of system matrix $A$. For this reason, many studies required multi-agent systems to be open-loop marginally stable \cite{zhang2023DSESWTICH,Lu2020Robust,10323478,liu2024distributed,ZHANG2024111755}. However, this paper considers the case where agents in $\mathcal{V}_\diamond^\sigma$ evolve with unstable dynamics. Consequently, the $i$th agent's dynamics can converge to $x_0$ in the time interval where $i\in\mathcal{V}_\star^\sigma$ and maybe away from $x_0$ otherwise. 
To overcome this issue, we need to guarantee that the convergence range of $e_i$ in the time interval of $i\in\mathcal{V}_\star^\sigma$ should be greater than the divergence range during others. To this end, the estimation of the measure of convergence period is necessary. According to Remark \ref{probability}, the probability that all arcs on the path remain up is $(\mu/(\lambda+\mu))^N\triangleq m_c>0$ because the length of a path from virtual leader to an agent is at most $N$. Now, we can state the following theorem.

\begin{theorem}\label{thm3}
	Given the switching topologies subject to Assumption \ref{assum-n}. Then, multi-agent systems (\ref{sys}) satisfying Assumption \ref{assum1} and \ref{assum2} can achieve synchronization against switching topologies without including spanning tree at any time if the control protocol (\ref{u1}) with network transformation mapping (\ref{b1}) is implemented and \textcolor{blue}{the convergence rate \(\wp_\star\) defined in Theorem~1 is designed such that}
	\begin{align}
		\wp_\star>\frac{1-m_c}{m_c}\wp_\diamond,
	\end{align}
	where
	\begin{align}
		\wp_\diamond=\frac{\bar{\kappa}_\diamond^\sigma\bar{\theta}_\diamond^\sigma}{\underline{\kappa}_\diamond^\sigma\underline{\theta}_\diamond^\sigma}\left(\frac{1}{2}\bar{\lambda}(sym\{A\})+\rho\right),
	\end{align} 
	with $\bar{\kappa}_\diamond^\sigma=\max\{\kappa_i,~i\in\mathcal{V}_\diamond^\sigma\}$, $\bar{\theta}_\diamond^\sigma=\bar{\lambda}(\Theta_\diamond^\sigma)$, $\underline{\kappa}_\diamond^\sigma=\min\{\kappa_i^{-1},~i\in\mathcal{V}_\diamond^\sigma\}$, and $\underline{\theta}_\diamond^\sigma=\underline{\lambda}(\Theta_\diamond^\sigma)$.
\end{theorem}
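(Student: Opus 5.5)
The plan is a switched-Lyapunov argument on each agent's tracking error, with a separate bound in each of the two regimes. Set $e_i=x_i-x_0$ and take the Lyapunov candidate
\[V(t)=\sum_{i=1}^N \frac{\theta_i^\sigma}{\kappa_i}\, e_i^TPe_i.\]
The weights are chosen so that the mode-dependent scalings stay between the uniform bounds $\underline{\kappa}\underline{\theta}$ and $\bar{\kappa}\bar{\theta}$. Then at each switching instant $V$ jumps by at most the ratio $\bar{\kappa}\bar{\theta}/(\underline{\kappa}\underline{\theta})$. This is exactly the ratio that appears as the prefactor of $\wp_\diamond$, and it lets us absorb the switching jumps into the exponential rates.

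\textbf{Step 1: the reachable and unreachable parts on a constant-$\sigma$ interval.} Fix an interval where $\sigma$ is constant. By Lemma~\ref{trans}, the transformed graph splits into $\mathcal{G}_\star^\sigma$ and $\mathcal{G}_\diamond^\sigma$.
\begin{itemize}
\item For $i\in\mathcal{V}_\star^\sigma$, Theorem~\ref{thm2} gives $\dot V_\star\le -\wp_\star V_\star$.
\item For $i\in\mathcal{V}_\diamond^\sigma$, the node is isolated, so $\beta_{ij}^\sigma=0$ for all $j$ and $\iota_i^\sigma=0$. Hence $u_i=0$ and $\dot e_i=Ae_i+f(x_i)-f(x_0)$.
\item Using Assumption~\ref{assum1}, we get $\tfrac{d}{dt}\|e_i\|^2\le(\bar\lambda(sym\{A\})+2\rho)\|e_i\|^2$. (The constant $\sqrt n$ from componentwise Lipschitzness is absorbed as in the paper.)
\item Passing to the weighted $P$-norm with the ratio of weights then gives $\dot V_\diamond\le \wp_\diamond V_\diamond$.
\end{itemize}
Because there are no arcs between the two parts, the two estimates decouple.

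\textbf{Step 2: per-agent error over a long horizon.} For agent $i$ on $[0,T]$, let $T_\star^i$ be the total time during which $i\in\mathcal{V}_\star^\sigma$. Chaining the interval bounds gives
\[\|e_i(T)\|^2\lesssim \exp\!\big(-\wp_\star T_\star^i+\wp_\diamond (T-T_\star^i)\big).\]
One must make sure that an agent entering $\mathcal{V}_\star^\sigma$ never imports divergence from unreachable nodes. The transformation guarantees this, because the reachable block $\mathcal{H}_\star^\sigma$ depends only on reachable states.

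\textbf{Step 3: time fraction in the reachable part.} By Remark~\ref{probability}, each link is an independent alternating renewal process with exponential up and down times. The path $P_i$ from the leader to $i$ in $\bar{\mathcal{E}}_\cup$ has at most $N$ arcs. Whenever all of its arcs are up, $i$ is reachable. By the ergodic (renewal-reward) theorem for the product Markov chain of link states, $\liminf_{T\to\infty} T_\star^i/T\ge m_c$ almost surely.

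\textbf{Step 4: conclusion.} The exponent in Step 2 is then eventually at most $-T\big(m_c\wp_\star-(1-m_c)\wp_\diamond\big)+o(T)$. This tends to $-\infty$ exactly under the stated condition $\wp_\star>\frac{1-m_c}{m_c}\wp_\diamond$. Hence $e_i\to0$ almost surely for every $i$.

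\textbf{Main obstacle.} I expect the main difficulty to be Step 2, in particular controlling the jumps of $V$ at switching instants. Switches occur at a positive rate, so the jump factors could accumulate linearly in the exponent. My first attempt would use a common $\Theta$ normalization that removes the jumps. If that is impossible, I would average the number of switches: it is Poisson with rate bounded by $|\bar{\mathcal{E}}_\cup|(\lambda\vee\mu)$, so the jump contribution can be folded into an effective rate. If that cost exceeds the slack, the margin would have to be strengthened.
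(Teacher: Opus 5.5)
Your route is the paper's: the reachable-block rate from Theorem~\ref{thm2}, open-loop growth of the unreachable nodes, per-agent chaining over switching intervals as in (\ref{dt}), and the time fraction $m_c$. Two parts of your write-up are cleaner than the paper's. Your Step~3 uses the ergodic theorem for the link-state chain, together with the fact that all arcs of $P_i$ being up implies $i\in\mathcal{V}_\star^\sigma$, whereas the paper simply writes the exponent as $(-m_c\wp_\star+(1-m_c)\wp_\diamond)t$. Your observation that unreachable nodes are isolated with $u_i=0$ also avoids the paper's $\mathcal{L}_\diamond^\sigma$, $\Theta_\diamond^\sigma$ computation. The argument still does not close, for two reasons.

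\textbf{First gap: jumps at switching instants.} The jumps you flag are a real gap, and your proposal leaves it open.

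Your $V=\sum_i(\theta_i^\sigma/\kappa_i)e_i^TPe_i$ is not the function Theorem~\ref{thm2} controls. That estimate is for $\xi_\star^T(\Upsilon_\star^\sigma\Theta_\star^\sigma\otimes P)\xi_\star$ with $\xi_\star=(\mathcal{H}_\star^\sigma\otimes I_n)e_\star$, i.e.\ mode-dependent coordinates on a mode-dependent node set. The stated $\wp_\diamond$, built on $\bar\lambda(sym\{A\})$ with no $P$, is a Euclidean rate. Your claim $\dot V_\diamond\le\wp_\diamond V_\diamond$ in the $P$-norm therefore does not follow: inside $V_\diamond$ each term carries its own weight, so the weight ratio plays no role, and the $P$-norm growth rate is governed by $P$, not by $\frac12\bar\lambda(sym\{A\})+\rho$.

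Consequently every switch costs a factor of the type $\vartheta_{i,\tau}$ in the paper. Even the passage between the Euclidean estimate and the $P$-weighted one costs $\sqrt{\bar\lambda(P)/\underline\lambda(P)}$ at each re-entry. No common normalization removes these factors, because $\mathcal{V}_\star^\sigma$, $\mathcal{H}_\star^\sigma$ and $\Theta_\star^\sigma$ all change with $\sigma$.

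The relevant switches occur at a positive long-run rate $r$, so their number on $[0,T]$ grows linearly in $T$ almost surely. The product of jump factors then adds a term of order $rT\log\bar\vartheta$ to the exponent, which is the same order as $T(m_c\wp_\star-(1-m_c)\wp_\diamond)$. Putting $\bar\kappa\bar\theta/(\underline\kappa\underline\theta)$ in front of $\wp_\diamond$ cannot absorb this: it rescales a rate, whereas a jump is a multiplicative factor per switch.

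Your Poisson fallback yields a condition like $m_c\wp_\star-(1-m_c)\wp_\diamond>r\log\bar\vartheta$. That is a different, stronger hypothesis than the stated one. It is also not automatically met by raising the gains, since $P$ (through $\kappa_*$) and $\Theta_\star^\sigma$ enter both $\wp_\star$ and $\vartheta$. The paper does not supply the missing idea either: in (\ref{dt}) it sets $\vartheta_i=\prod_{j=0}^M\vartheta_{i,j}$ and treats it as a constant, although $M\to\infty$ with $t$.

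\textbf{Second gap: per-agent chaining in Step~2.} Theorem~\ref{thm2} bounds a collective quantity. On a reachable interval you therefore only get $\|e_i(t)\|\le c\,e^{-\wp_\star(t-t_m)}\|e_\star(t_m)\|$, not a bound in terms of $\|e_i(t_m)\|$.

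Suppose $i$ shares $\mathcal{V}_\star^\sigma$ with nodes that have just come out of a long unreachable spell. Their accumulated errors enter $e_i$ through $\mathcal{H}_\star^\sigma$. The transformation blocks contamination from \emph{currently} unreachable nodes, not the past divergence that currently reachable nodes carry. So your sentence justifying Step~2 ("the transformation guarantees this") is wrong.

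As a result, agent $i$'s exponent cannot be written through its own $T_\star^i$ alone. The bookkeeping has to be joint, for example on $\max_j\|e_j\|$ over windows in which every agent has been reachable. The resulting threshold will in general involve more than agent $i$'s own reachable fraction $m_c$: it also depends on how long the other members of $i$'s reachable group have been cut off. The paper makes the same leap at the end of the proof of Theorem~\ref{thm2}, where a per-agent bound on $\|e_i(t)\|$ in terms of $\|e_i(t_m)\|$ is read off from the bound on $\|\xi_\star\|$.
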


\begin{IEEEproof}
This proof will be completed in two steps. 

1) \textcolor{blue}{First, we give an upper bound on the divergence rate of $e_i$ for $i\in\mathcal{V}_\diamond^\sigma$.}  Since all pinning nodes belong to $\mathcal{V}_\star^\sigma$ and there is no arcs between $\mathcal{V}_\diamond^\sigma$ and $\mathcal{V}_\star^\sigma$, we know $\iota_i^\sigma=0$ for all $i\in\mathcal{V}_\diamond^\sigma$. Let $e_\diamond=col\{e_i,~i\in\mathcal{V}_\diamond^\sigma\}$, then
\begin{align}
\dot{e}_\diamond&=\left(I_{\pi'(\sigma)}\otimes A\right)e_\diamond+\tilde{f}_\diamond(\bar{x}_\diamond,\bar{x}_{0,\diamond})-(\Upsilon_\diamond^\sigma\mathcal{L}_\diamond^\sigma\otimes B\mathcal{K})e_\diamond,
\end{align}
where $\tilde{f}_\diamond(\bar{x}_\diamond,\bar{x}_{0,\diamond})=col\{\tilde{f}_i(x_i,x_0),~i\in\mathcal{V}_\diamond^\sigma\}$ with $\tilde{f}_i(x_i,x_0)=f(x_i)-f(x_0)$; $\pi'(\sigma)$ represents the cardinal number of $\mathcal{V}_\diamond^\sigma$, and $\Upsilon_\diamond^\sigma=diag\{\kappa_i,~i\in\mathcal{V}_\diamond^\sigma\}$. 

Using the conclusion of Lemma \ref{M2}, we know $\mathcal{L}_\diamond^\sigma$ is a singular M-matrix and satisfies $\mathcal{L}_\diamond^\sigma 1_{\pi'(\sigma)}\geq 0$. Hence, there exists a positive semi-definite matrix $\Theta_\diamond^\sigma$ such that
\begin{align}
\Theta_\diamond^\sigma\mathcal{L}_\diamond^\sigma+\left(\mathcal{L}_\diamond^\sigma\right)^T\Theta_\diamond^\sigma\geq 0.
\end{align}
Then, by constructing the Lyapunov candidate function as $V_2=e_\diamond^T\left(\Theta_\diamond^\sigma\Upsilon_\diamond^\sigma\otimes I_{\pi'(\sigma)}\right)e_\diamond$, one has
\begin{align}
\dot{V}_2=&e_\diamond^T\left(\Theta_\diamond^\sigma\Upsilon_\diamond^\sigma\otimes (A+A^T)\right)e_\diamond\notag\\
&+2e_\diamond^T\left(\Theta_\diamond^\sigma\Upsilon_\diamond^\sigma\otimes I_{\pi'(\sigma)}\right)\tilde{f}_\diamond(\bar{x}_\diamond,\bar{x}_{0,\diamond})\notag\\
&-\left(\Upsilon_\diamond^\sigma\left(\Theta_\diamond^\sigma\mathcal{L}_\diamond^\sigma+\left(\mathcal{L}_\diamond^\sigma\right)^T\Theta_\diamond^\sigma\right)\Upsilon_\diamond^\sigma\otimes BKK^TB^T\right)e_\diamond\notag\\
\leq&\left(\bar{\kappa}_\diamond^\sigma\bar{\theta}_\diamond^\sigma\bar{\lambda}(sym\{A\})+2\rho\sqrt{N}\bar{\kappa}_\diamond\bar{\theta}^\sigma\right)\|e_\diamond\|^2\notag\\
\leq& 2\wp_\diamond V_2(t).
\end{align}
Thus,
\begin{align}
V_2(t)\leq V_2(t_m)\exp\{2\wp_\diamond(t-t_m)\},
\end{align}
where notation $t_m$ has the same meaning as that in Theorem \ref{thm2}. Within the definition of $V_2(t)$, we further obtain
\begin{align}
&\frac{1}{2}\underline{\kappa}_\diamond^\sigma\underline{\theta}_\diamond^\sigma\|e_\diamond(t)\|^2\leq V_2(t)\leq V_2(t_m)\exp\{2\wp_\diamond(t-t_m)\}\notag\\
&\leq \frac{1}{2}\bar{\kappa}_\diamond^\sigma\bar{\theta}_\diamond^\sigma\|e_\diamond(t_m)\|^2\exp\{2\wp_\diamond(t-t_m)\},
\end{align} 
which leads to
\begin{align}
\|e_i(t)\|\leq\sqrt{\frac{\bar{\kappa}_\diamond^\sigma\bar{\theta}_\diamond^\sigma}{\underline{\kappa}_\diamond^\sigma\underline{\theta}_\diamond^\sigma}}\|e_i(t_m)\|\exp\{\wp_\diamond(t-t_m)\},
\end{align}
for $i\in\mathcal{V}_\diamond^\sigma$.

2) Second, we prove sufficient conditions for the synchronization against switching topologies without a spanning tree at any time.

Let $t_\tau$ be the time when a failure or attack occurs or a communication link is repaired, where $\tau=1,2,\ldots$ Denote $\beth_{i,\tau}=\wp_\star$ and $\vartheta_{i,\tau}=\sqrt{\bar{\kappa}_\star^\sigma\bar{\theta}_\star^\sigma\bar{\lambda}(\mathcal{H}_\star^\sigma)\bar{\lambda}(P)/\underline{\kappa}_\star^\sigma\underline{\theta}_\star^\sigma\underline{\lambda}(\mathcal{H}_\star^\sigma)\underline{\lambda}(P)}$ if $i\in\mathcal{V}_{\star}^\sigma$ with $t\in[t_\tau,t_{\tau+1})$ and also denote $\beth_{i,\tau}=\wp_\diamond$ and $\vartheta_{i,\tau}=\sqrt{\bar{\kappa}_\diamond^\sigma\bar{\theta}_\diamond^\sigma/\underline{\kappa}_\diamond^\sigma\underline{\theta}_\diamond^\sigma}$ otherwise.

For a large enough $M>0$ and $t\in(t_{M},t_{M+1})$, we have
\begin{align}\label{dt}
\|e_i(t)\|\leq& \vartheta_{i,M}\|e_i(t_M)\|\exp\{\beth_{i,M}(t-t_M)\}\notag\\
\leq&\vartheta_{i,M}\vartheta_{i,M-1}\|e_i(t_{M-1})\|\notag\\
&\times\exp\{\beth_{i,M}(t-t_M)\}\exp\{\beth_{i,M-1}(t_M-t_{M-1})\}\notag\\
\leq& \|e_i(t_0)\|\exp\{\beth_{i,M}(t-t_M)\}\notag\\
&\times\prod_{j=0}^M\vartheta_{i,j}\times \prod_{j=0}^M\exp\{\beth_{i,j}(t_{j+1}-t_j)\}\notag\\
\leq&\vartheta_i\|e_i(t_0)\|\exp\{-m_c\wp_\star t\}\exp\{(1-m_c)\wp_\diamond t\}\notag\\
=&\vartheta_i\|e_i(t_0)\|\exp\left\{\left(-m_c\wp_\star+(1-m_c)\wp_\diamond\right)t\right\},
\end{align}
where $\vartheta_i=\prod_{j=0}^M\vartheta_{i,j}$. Since $\wp_\star$ is a parameter that can be designed arbitrarily, one may choose
\begin{align}
	\wp_\star>\frac{1-m_c}{m_c}\wp_\diamond
\end{align}
such that the error dynamic converge to zero with fixed coupling gain under the switching topologies without a spanning tree at any time.
\end{IEEEproof}

\begin{remark}
	\color{red}Note that none of the proofs in this paper rely on the memoryless property of the exponential distribution. The only property used is the positive probability that a link stays up, i.e., $\frac{\mu}{\lambda+\mu}$ (as stated in Remark~\ref{probability}). Actually, the negative exponential distribution in Assumption~1 is adopted because it is the most common model for the failure rate of electronic components and communication links. Consequently, Assumption~\ref{assum-n} can be extended to any independent, ergodic renewal or Markov switching processes, as long as each link has a well-defined positive probability of being up. In such extensions, the constant $m_c = (\mu/(\lambda+\mu))^N$ in Theorem~\ref{thm3} should be replaced by the product of the individual links' up probabilities, and all conclusions remain valid.
\end{remark}

\begin{remark}\label{rem-increase}
	In light of the proof of Theorem \ref{thm3}, we know that the realization of synchronization mainly depends on the convergence of the agents belonging to $\mathcal{V}_\star^\sigma$. It is shown in Theorem \ref{thm2} that the agents in $\mathcal{V}_\star^\sigma$ achieve synchronization with convergence rate $\wp_\star$. And $\wp_\star$ increases with $\bar{\kappa}_\star$ and $\underline{\kappa}_\star$ increasing. Hence, a group of larger coupling gains may improve the performance of synchronization. {\color{blue}Moreover, even if $m_c$ is very small (for example, under severe attack scenarios), we can still ensure synchronization by increasing coupling gains.}
\end{remark}

\begin{remark}
	It is noticed that both $m_c$ and $\wp_\star$ are difficult to be calculated. The statements in Remark \ref{rem-increase} infers the probability of using adaptive coupling gain. Therefore, Section \ref{sec4} will focus on this problem, and the results in this Section will play an important role.  
\end{remark}

\section{Synchronization under switching topologies with adaptive coupling gain}\label{sec4}

This section focuses on the synchronization of multi-agent systems under switching topologies with adaptive coupling gain. In the beginning, we give the corresponding adaptive control law:
\begin{align}
	&u_i=\left(\gamma_i+\omega_i\right)\mathcal{K}\left(\sum_{j=1}^N\beta_{ij}^\sigma\left(x_j-x_i\right)+\iota_i\left(x_0-x_i\right)\right),\label{u2}\\
	&\dot{\gamma}_i=\omega_i=\left\|\sum_{j=1}^N\beta_{ij}^\sigma\left(x_j-x_i\right)+\iota_i\left(x_0-x_i\right)\right\|_P^2,\label{u3}
\end{align}
where $\gamma_i+\omega_i$ are coupling gains that play the same role as $\kappa_i$ in (\ref{u1}); $\mathcal{K}\in\mathbb{R}^n$ is the feedback gain that will be designed later.

\begin{theorem}\label{thm1}
	Consider the multi-agent systems (\ref{sys}) subjected to Assumption \ref{assum1}--\ref{assum2}. We design the control law (\ref{u2}) with adaptive coupling gain (\ref{u3}) by choosing feedback gain $\mathcal{K}=KK^TB^TP$, and a symmetric positive definite $P$ satisfying
	\begin{align}
		sym\{P(A-\gamma_*BK)\}=-\gamma_*I_n,
	\end{align}
	where {\color{blue}$K$ is chosen such that $A-\gamma_*BK$ is Hurwitz stable and $\gamma_*$ is an arbitrary constant satisfying $\gamma_*>1$}. Then, synchronization of multi-agent system (\ref{sys}) can be achieved against the switching topologies described in Assumption \ref{assum-n}.
\end{theorem}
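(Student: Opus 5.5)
The plan is to reduce Theorem \ref{thm1} to the fixed-gain result of Theorem \ref{thm3}. The adaptive gains $\gamma_i+\omega_i$ play the role of $\kappa_i$, and the argument shows that they must eventually exceed whatever threshold makes $\wp_\star>\frac{1-m_c}{m_c}\wp_\diamond$ hold. The steps are as follows.

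\emph{Step 1: error dynamics.} Write $e_i=x_i-x_0$ and the local neighborhood error $\xi_i=\sum_j\beta_{ij}^\sigma(x_j-x_i)+\iota_i(x_0-x_i)$. Stacking these for $i\in\mathcal{V}_\star^\sigma$ gives $\xi_\star=-(\mathcal{H}_\star^\sigma\otimes I_n)e_\star$. For $i\in\mathcal{V}_\diamond^\sigma$, Lemma \ref{trans} gives $\xi_i=0$, so those nodes evolve open-loop with divergence rate at most $\frac12\bar\lambda(sym\{A\})+\rho$. In particular $\omega_i=\dot\gamma_i\geq0$, so every $\gamma_i$ is nondecreasing.

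\emph{Step 2: Lyapunov function and Barbalat.} On each switching interval, take
\begin{align*}
V=\sum_{i\in\mathcal{V}}\theta_i^\sigma\|\xi_i\|_P^2\text{-type terms}+\sum_{i=1}^N(\gamma_i-\hat\gamma)^2,
\end{align*}
i.e. $V=e_\star^T(\Theta_\star^\sigma\mathcal{H}_\star^\sigma\otimes P)e_\star$, with $\Theta_\star^\sigma$ from Lemma \ref{M1}, plus the quadratic gain-mismatch terms for some large constant $\hat\gamma$. With $\mathcal{K}=KK^TB^TP$ and the Riccati-like identity $sym\{P(A-\gamma_*BK)\}=-\gamma_*I_n$, $\gamma_*>1$, the derivative of $V$ on a $\star$-interval becomes
\begin{align*}
\dot V\le -c\sum_{i\in\mathcal{V}_\star^\sigma}(\gamma_i+\omega_i)\,\omega_i+\hat\gamma\text{-terms}.
\end{align*}
This is the standard adaptive-consensus computation in the style of Theorem \ref{thm2}'s appendix proof, and the Lipschitz term is absorbed using Assumption \ref{assum1} once $\hat\gamma$ is large. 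On $\diamond$-intervals, $V$ can grow at most exponentially with rate $2(\frac12\bar\lambda(sym\{A\})+\rho)$.

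\emph{Step 3: dichotomy for the gains.} Argue by contradiction that each $\gamma_i$ either converges or, if unbounded, forces convergence anyway.
\begin{itemize}
\item[(a)] Suppose some $\gamma_i\to\infty$. Since every $\gamma_j$ is nondecreasing, after a finite time all gains on the reachable part are large. Combined with Remark \ref{probability}, which says the path to each node is fully up for a fraction $m_c>0$ of time, the fixed-gain estimate \eqref{dt} of Theorem \ref{thm3} then applies, with $\kappa_i$ replaced by the lower bound $\gamma_i(t_0')$. This yields exponential decay of $e_i$. Exponential decay makes $\int\omega_i<\infty$, which contradicts unboundedness, or at least shows the divergence cannot persist.
\item[(b)] Suppose all $\gamma_i$ are bounded. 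Then $\int_0^\infty\|\xi_i\|_P^2\,dt<\infty$ for every $i$. Because each node is in $\mathcal{V}_\star^\sigma$ a positive fraction $m_c$ of the time, and $\xi_i$ is comparable to $e_\star$ through $\underline\lambda(\mathcal{H}_\star^\sigma)>0$ on those intervals, a Barbalat-type argument on the union of $\star$-intervals gives $e_i\to0$. This step also requires uniform continuity of $e$ across the bounded-dwell switching.
\end{itemize}

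\emph{Main obstacle.} The hardest part is case (b): finite $\int\omega_i$ must be reconciled with open-loop growth during $\diamond$-intervals. With bounded gains the ratio condition of Theorem \ref{thm3} need not hold, so growth in $\diamond$-intervals could dominate. I would try to show this forces $\int\omega_i=\infty$, hence case (a). Concretely, if $e$ did not tend to zero, the exponential growth on unreachable intervals re-enters reachable intervals with $\|\xi_i\|\ge\underline\lambda(\mathcal{H}_\star^\sigma)\|e_i\|$ bounded below over time sets of positive density. That makes $\int\omega_i$ diverge, and by (a) the gains then grow until $\wp_\star>\frac{1-m_c}{m_c}\wp_\diamond$, after which Theorem \ref{thm3} concludes synchronization.
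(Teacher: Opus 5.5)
Your overall route is the paper's. The gains are nondecreasing, they grow until the fixed-gain condition of Theorem \ref{thm3} holds, (\ref{dt}) then gives exponential decay, and that decay makes $\gamma_i(\infty)=\gamma_i(0)+\int_0^\infty\|\xi_i\|_P^2\,dt$ finite. Your Step 1 observation is also correct: $\xi_i\equiv 0$ while $i\in\mathcal{V}_\diamond^\sigma$, so those nodes run open loop and their gains are frozen.

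\textbf{Gap in Step 3(a).} The sentence ``since every $\gamma_j$ is nondecreasing, after a finite time all gains on the reachable part are large'' does not follow. Each gain moves only through $\dot\gamma_j=\|\xi_j\|_P^2$. This vanishes while $j\in\mathcal{V}_\diamond^\sigma$ and can stay small for a node whose own local error is small, so divergence of one $\gamma_i$ says nothing about the others. A single large gain also does not help:
\begin{itemize}
\item in (\ref{V2}) the stabilizing term scales with $\underline{\kappa}_\star^\sigma$, while the Lipschitz term scales with $\bar{\kappa}_\star^\sigma$;
\item the overshoot factors $\vartheta_{i,\tau}$ in (\ref{dt}) grow with $\bar{\kappa}_\star^\sigma/\underline{\kappa}_\star^\sigma$ and are incurred at every switching instant;
\item in (\ref{sufficient}) the right-hand side scales like $\bar\gamma^2/\underline\gamma$, against $\bar\gamma\underline\gamma$ on the left.
\end{itemize}
What you actually need is that the \emph{smallest} gain on $\mathcal{V}_\star^\sigma$ becomes large while $\bar\gamma/\underline\gamma$ stays controlled. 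The paper gets past this point by treating all $\gamma_i$ as growing together (``all $\gamma_i$s have the same dynamics'') and reducing (\ref{sufficient}) to a scalar inequality in one variable $x$. Your Step 3(a) makes the same move implicitly, and it is exactly the step that needs an argument.

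Relatedly, (\ref{dt}) is a fixed-gain estimate. With $\Pi_\star^\sigma$ time-varying, a $\Upsilon$-weighted Lyapunov function picks up the nonnegative term $\xi_\star^T(\dot\Pi_\star^\sigma\Theta_\star^\sigma\otimes P)\xi_\star$. So ``replace $\kappa_i$ by the lower bound $\gamma_i(t_0')$'' is not automatic.

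\textbf{Gap in case (b).} This case is left open, and the route you sketch has two concrete problems.
\begin{itemize}
\item The per-node bound $\|\xi_i\|\geq\underline{\lambda}(\mathcal{H}_\star^\sigma)\|e_i\|$ is false; a non-pinning node with $e_i=e_j\neq 0$ for its only neighbor has $\xi_i=0$. Only the collective bound $\|\xi_\star\|\geq c\|e_\star\|$ holds, with $c$ the smallest singular value of $\mathcal{H}_\star^\sigma$. So non-convergence of $e$ forces $\sum_i\gamma_i\to\infty$, not $\min_i\gamma_i\to\infty$, which sends you straight back into the gap in (a).
\item A Barbalat argument needs a uniformly continuous integrand, and that fails here:
  \begin{itemize}
  \item $\xi_i$ jumps at every switching instant, because $\beta_{ij}^\sigma$ does.
  \item Its derivative is bounded only if $e$ is bounded, which is not known, since $e$ grows open loop on $\diamond$-intervals.
  \item There is no dwell-time bound to lean on. Under Assumption \ref{assum-n} the up/down durations are exponential, hence arbitrarily short and arbitrarily long, so the open-loop growth factors $\exp\{c\tau\}$ over $\diamond$-intervals are not uniformly bounded.
  \end{itemize}
\end{itemize}
Your contradiction skeleton is logically sound: (a) is impossible, and $e\not\to 0$ forces (a). But both implications currently rest on the unproven claim that all gains on $\mathcal{V}_\star^\sigma$ grow together.
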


\begin{IEEEproof}
	The proof of this Theorem can be divided into two portions. First, we will show the $i$th agent's states $x_i$ converge to $x_0$ asymptotically with adaptive coupling gain for $i\in\mathcal{V}_{\star}^\sigma$. Second, it is shown that the adaptive coupling gain will gradually improve the convergence rate of nodes in $\mathcal{V}_{\star}^\sigma$, and then drive the synchronization of the whole system.
	
    Part 1):~Synchronization in $\mathcal{V}_{\star}^\sigma$ with adaptive gains. We also denote $e_i$ the synchronization error of the $i$th agent. Within the same notations defined in Theorem \ref{thm2}, we have $\xi_\star=(\mathcal{H}_\star^\sigma\otimes I_n)e_\star$. Hence, in the time interval where $\sigma(t)$ is a constant, we obtain
	\begin{align}\label{xi1}
		\dot{\xi}_\star=&(\mathcal{H}_\star^\sigma\otimes I_n)\dot{e}_\star\notag\\
		=&(I_{\pi(\sigma)}\otimes A)\xi_\star-\left(\mathcal{H}_\star^\sigma\Pi_\star^\sigma\otimes B\mathcal{K}\right)\xi_\star\notag\\
		&+(\mathcal{H}_\star^\sigma\otimes I_n)\tilde{f}_\star(\bar{x}_\star,\bar{x}_{0,\star}),
	\end{align}
	where $\Pi_\star^\sigma$ is given by $diag\{\pi_i,~i\in\mathcal{V}_\star^\sigma\}$ with $\pi_i=\gamma_i+\omega_i$. 
	
	To move on, denote a constant $\mathcal{M}=\bar{\kappa}\triangleq\max_i\{\kappa_i,~i=1,\ldots,N\}$. First, consider the scenario where $\gamma_i>\mathcal{M}$. Then, we have $\pi_i\geq\bar{\kappa}+\|\xi_i\|_P>\bar{\kappa}$. By the results of Theorem \ref{thm2} and Theorem \ref{thm3}, a group of coupling gains that are greater than $\bar{\kappa}$ will lead to the synchronization of multi-agent systems. Moreover, the synchronization yields $\omega_i\to 0$ and thus $\dot{\gamma}_i$ tends to be a constant. 
	
	Then, suppose there is a constant $T$ such that $\gamma_i<\mathcal{M}$ for $t<T$. In this scene, we choose the Lyapunov candidate as
	\begin{align} V_\star(t)=\frac{1}{2}\sum_{i\in\mathcal{V}_\star^\sigma}\theta_i^\sigma\left(2\gamma_i+\omega_i\right)\xi_i^TP\xi_i+\theta_i^\sigma(\gamma_i-\theta_i^\sigma\gamma_*)^2, \notag
	\end{align}	
	where $\gamma_*>0$ is a given constant; $\theta_i^\sigma$ is the entry of a diagonal positive definite matrix $\Theta_\star^\sigma=diag\{\theta_i^\sigma,~i\in\mathcal{V}_\star^\sigma\}$, and which is designed so that $sym\{\Theta_\star^\sigma\mathcal{H}_\star^\sigma\}>\gamma_* I_{n\pi(\sigma)}$. Hence, we can derivate $V_\star(t)$ along with $\xi_\star$ and yield
	\begin{align}\label{dv}
		\dot{V}_\star(t)=&\xi_\star^T(\Pi_\star^\sigma\Theta_\star^\sigma\otimes P)\dot{\xi}_\star\notag\\
		&+\xi_\star^T(\Pi_\star^\sigma\Theta_\star^\sigma\otimes I_n)\xi_\star-\gamma_*\xi_\star^T(\Theta_\star^\sigma\Theta_\star^\sigma\otimes I_n)\xi_\star\notag\\
		=&\xi_\star^T(\Pi_\star^\sigma\Theta_\star^\sigma\otimes (PA+A^TP))\xi_\star\notag\\
		&-\xi_\star^T\left(\Pi_\star^\sigma\Theta_\star^\sigma\mathcal{H}_\star^\sigma\Pi_\star^\sigma\otimes PB\mathcal{K}\right)\xi_\star\notag\\
		&-\xi_\star^T\left(\Pi_\star^\sigma\left(\mathcal{H}_\star^\sigma\right)^T\Theta_\star^\sigma\Pi_\star^\sigma\otimes \mathcal{K}^TB^TP\right)\xi_\star\notag\\
		&+2\xi_\star^T(\Pi_\star^\sigma\Theta_\star^\sigma\mathcal{H}_\star^\sigma\otimes P)\tilde{f}_\star(\bar{x}_\star,\bar{x}_{0,\star})\notag\\
		&+\xi_\star^T(\Pi_\star^\sigma\Theta_\star^\sigma\otimes I_n)\xi_\star-\gamma_*\xi_\star^T(\Theta_\star^\sigma\Theta_\star^\sigma\otimes I_n)\xi_\star.
	\end{align}
	Since $\mathcal{K}=KK^TB^TP$, we know $PB\mathcal{K}=\mathcal{K}^TB^TP=PBKK^TB^TP$. Then, 
	\begin{align}
		\dot{V}_\star(t)=&\xi_\star^T(\Pi_\star^\sigma\Theta_\star^\sigma\otimes (PA+A^TP))\xi_\star\notag\\
		&-\xi_\star^T\left(\Pi_\star^\sigma sym\{\Theta_\star^\sigma\mathcal{H}_\star^\sigma\}\Pi_\star^\sigma\otimes PBKK^TB^TP\right)\xi_\star\notag\\
		&+2\xi_\star^T(\Pi_\star^\sigma\Theta_\star^\sigma\mathcal{H}_\star^\sigma\otimes P)\tilde{f}_\star(\bar{x}_\star,\bar{x}_{0,\star})\notag\\
		&+\xi_\star^T(\Pi_\star^\sigma\Theta_\star^\sigma\otimes I_n)\xi_\star-\gamma_*\xi_\star^T(\Theta_\star^\sigma\Theta_\star^\sigma\otimes I_n)\xi_\star\notag\\
		\leq&\xi_\star^T(\Pi_\star^\sigma\Theta_\star^\sigma\otimes (PA+A^TP))\xi_\star\notag\\
		&-\gamma_*\xi_\star^T\left(\left(\Pi_\star^\sigma\right)^2\otimes PBKK^TB^TP\right)\xi_\star\notag\\
		&+2\xi_\star^T(\Pi_\star^\sigma\Theta_\star^\sigma\mathcal{H}_\star^\sigma\otimes P)\tilde{f}_\star(\bar{x}_\star,\bar{x}_{0,\star})\notag\\
		&+\xi_\star^T(\Pi_\star^\sigma\Theta_\star^\sigma\otimes I_n)\xi_\star-\gamma_*\xi_\star^T(\Theta_\star^\sigma\Theta_\star^\sigma\otimes I_n)\xi_\star.
	\end{align}
	
	Then, by repeating the process of (\ref{t1dv1})--(\ref{t1dv2}) in the proof of Theorem \ref{thm2}, we have
	\begin{align}\label{dv1}
		&\xi_\star^T(\Pi_\star^\sigma\Theta_\star^\sigma\otimes (PA+A^TP))\xi_\star\notag\\
		-&\xi_\star^T\left(\Pi_\star^\sigma sym\{\Theta_\star^\sigma\mathcal{H}_\star^\sigma\}\Pi_\star^\sigma\otimes PBKK^TB^TP\right)\xi_\star\notag\\
		+&\xi_\star^T(\Pi_\star^\sigma\Theta_\star^\sigma\otimes I_n)\xi_\star-\gamma_*\xi_\star^T(\Theta_\star^\sigma\Theta_\star^\sigma\otimes I_n)\xi_\star\notag\\
		=&-(\gamma_*-1)\xi_\star^T(\Pi_\star^\sigma\Theta_\star^\sigma\otimes I_n)\xi_\star,
	\end{align}
	and
	\begin{align}\label{dv2}
		&\xi_\star^T(\Pi_\star^\sigma\Theta_\star^\sigma\mathcal{H}_\star^\sigma\otimes P)\tilde{f}_\star(\bar{x}_\star,\bar{x}_{0,\star})\notag\\
		\leq&\sqrt{N}\rho\mathcal{M}\bar{\theta}_\star^\sigma\bar{\lambda}(P)\bar{\lambda}(\mathcal{H}_\star^\sigma)\underline{\lambda}(\mathcal{H}_\star^\sigma)\|\xi_\star\|^2.
	\end{align}
	
	
	Substituting (\ref{dv1}) and (\ref{dv2}) into (\ref{dv}) yields
	\begin{align*}
		\dot{V}_\star\leq& -(\gamma_*-1)\xi_\star^T(\Pi_\star^\sigma\Theta_\star^\sigma\otimes I_n)\xi_\star\notag\\
		&+\sqrt{N}\rho\mathcal{M}\bar{\theta}_\star^\sigma\bar{\lambda}(P)\bar{\lambda}(\mathcal{H}_\star^\sigma)\underline{\lambda}(\mathcal{H}_\star^\sigma)\|\xi_\star\|^2\notag\\
		\leq&\left(-(\gamma_*-1)\underline{\gamma}\underline{\theta}_\star^\sigma+\wp_{2\star}\right)\|\xi_\star\|^2,
	\end{align*}
	where $\underline{\gamma}=\min_i\{\gamma_i,~i\in\mathcal{V}_\star^\sigma\}$ with $\gamma_i(0)$ being the initial value of $\gamma_i$; and $\wp_{2\star}=\sqrt{N}\rho\mathcal{M}\bar{\theta}_\star^\sigma\bar{\lambda}(P)\bar{\lambda}(\mathcal{H}_\star^\sigma)\underline{\lambda}(\mathcal{H}_\star^\sigma)$. Therefore, $\dot{V}_\star< 0$ if $\underline{\gamma}>\wp_\star/((\gamma_*-1)\underline{\theta}_\star^\sigma)$. 
	
	It indicates that $\xi_\star$ has an asymptotic convergence trend within the current time period if $\gamma_i>\min\{\mathcal{M},\wp_\star/((\gamma_*-1)\underline{\theta}_\star^\sigma)\}$ for all $i\in\mathcal{V}_\star^\sigma$. Furthermore, since the right side of the equation (\ref{u3}) is a radial unbounded function, $\gamma_i$ will continue increase until $\underline{\gamma}>\wp_\star/((\gamma_*-1)\underline{\theta}_\star^\sigma)$. Up to now, we have shown that the adaptive control law (\ref{u2})--(\ref{u3}) can make the tracking error of the agent in $\mathcal{V}_\star^\sigma$ have a convergence trend.
	
	Part 2):~Synchronization can be achieved by adaptive coupling gain and switching topologies. Since Part 1) has shown $x_i$ converges to $x_0$ when $i\in\mathcal{V}_{\star}^\sigma$, we can estimate the associated convergence rate by repeating the method in Theorem \ref{thm2} (Convergence infers the boundedness of $\pi_i$ when $i\in\mathcal{V}_{\star}^\sigma$ and thus one may estimate its convergence rate by regarding $\pi_i$ as a bounded constant at any time). Consequently, within (\ref{rela}) and Theorem \ref{thm3}, the multi-agent systems achieve synchronization against switching topologies without including any spanning tree if there exists a group of large enough $\gamma_i$ such that
\begin{align*}
	&\frac{2}{\bar{\gamma}_\star^\sigma\bar{\theta}_\star^\sigma}\left(\bar{\gamma}_\star^\sigma\underline{\gamma}_\star^\sigma\underline{\theta}_\star^\sigma-\bar{\gamma}_\star^\sigma(\bar{\theta}_\star^\sigma)^2\right)\notag\\
	+&4\sqrt{N}\rho\bar{\lambda}(P)\bar{\lambda}(\mathcal{H}_\star^\sigma)\underline{\lambda}(\mathcal{H}_\star^\sigma)>\frac{1-m_c}{m_c}\wp_\diamond\bar{\gamma}_\star^\sigma,
\end{align*}
where $\bar{\gamma}_\star^\sigma=\max\{\gamma_i,~i\in\mathcal{V}_\star^\sigma\}$ and $\underline{\gamma}_\star^\sigma=\min\{\gamma_i,~i\in\mathcal{V}_\star^\sigma\}$. This formula holds if
\begin{align}\label{sufficient}
	&\frac{2}{\bar{\theta}_\star^\sigma}\left(\bar{\gamma}\underline{\gamma}\underline{\theta}_\star^\sigma-\bar{\gamma}(\bar{\theta}_\star^\sigma)^2\right)
	+4\sqrt{N}\rho\bar{\lambda}(P)\bar{\lambda}(\mathcal{H}_\star^\sigma)\underline{\lambda}(\mathcal{H}_\star^\sigma)\notag\\
	&>\frac{1-m_c}{m_c}\frac{\bar{\theta}_\diamond^\sigma}{\underline{\theta}_\diamond^\sigma}\left(\frac{1}{2}\bar{\lambda}(sym\{PA\})+\rho\bar{\lambda}(P)\right)\cdot\frac{\bar{\gamma}^2}{\underline{\gamma}},
\end{align}
where $\bar{\gamma}=\max\{\gamma_i,~i=1,\ldots,N\}$, and $\underline{\gamma}=\min\{\gamma_i,~i=1,\ldots,N\}$. Note that the right half part of (\ref{u2}) is a radial unbounded function and indicates that the coupling gain $\gamma_i$ will increase monotonically down to the synchronization. Namely, the tracking error cannot converge to zero with a group of initial gain $\gamma_i$ that do not fulfill the sufficient condition (\ref{sufficient}). It will lead to the monotonical increase of $\gamma_i$ and the error dynamics begin to converge when (\ref{sufficient}) holds. Hence, the problem is transformed to show that equation (\ref{sufficient}) can be fulfilled with the increase of $\gamma_i$.

Since range of $\sigma(t)$ contains only a finite number of elements, we have $\bar{\theta}_\star^M\triangleq\max_t\{\bar{\theta}_\star^\sigma\}$, $\underline{\theta}_\star^m\triangleq\min_t\{\underline{\theta}_\star^\sigma\}$,  $\bar{\theta}_\diamond^M\triangleq\max_t\{\bar{\theta}_\diamond^\sigma\}$, and $\underline{\theta}_\diamond^m\triangleq\min_t\{\underline{\theta}_\diamond^\sigma\}$. Consequently, the following constants can be defined:
\begin{align}
	&\psi_1\triangleq \frac{2{\underline{\theta}_\star^m}}{\bar{\theta}_\star^M}, \\
	&\psi_2\triangleq4\sqrt{N}\rho\bar{\lambda}(P)\bar{\lambda}(\mathcal{H}_\star^\sigma)\underline{\lambda}(\mathcal{H}_\star^\sigma)-2\bar{\theta}_\star^M,\\
	&\psi_{3}\triangleq\frac{1-m_c}{m_c}\frac{\bar{\theta}_\diamond^M}{\underline{\theta}_\diamond^m}\left(\frac{1}{2}\bar{\lambda}(sym\{PA\})+\rho\bar{\lambda}(P)\right).
\end{align}
Then, a simplified form of (\ref{sufficient}) gives rise to
\begin{align}
	\psi_{1}\underline{\gamma}^2\bar{\gamma}+\psi_2\underline{\gamma}\bar{\gamma}-\psi_{3}\bar{\gamma}^2>0.
\end{align}
Because all $\gamma_i$s have the same dynamics, the properties of the above formula can be studied by an approximate function
\begin{align}
	\psi_1x^2+\psi_2x-\psi_3x>0.
\end{align}
It is not difficult to find that there is a constant $x_*$ so that the function is monotonic increasing for $x>x_*$. It indicates that (\ref{sufficient}) can be achieved with the increase of $\gamma_i$. {\color{blue}As a result, similar to the process in (\ref{dt}), we can obtain that $\|e_i(t)\|$ exhibits an exponential convergence trend once (\ref{sufficient}) holds.}

{\color{red}We further note that the adaptive gains $\gamma_i(t)$ will finally converge to a constant. From~(\ref{u3}), $\dot{\gamma}_i = \|\xi_i\|_P^2 \ge 0$ with $\xi_{i}=\sum_{j=1}^N\beta_{ij}^\sigma\left(x_j-x_i\right)+\iota_i^\sigma\left(x_0-x_i\right)$, so each $\gamma_i(t)$ is nondecreasing. In the previous proof, we have shown that once the coupling gains become sufficiently large to satisfy condition~(\ref{sufficient}), the tracking errors $e_i$ converge to zero exponentially. Consequently, $\xi_i$ also converges to zero exponentially, implying that $\|\xi_i\|_P^2$ decays exponentially. Hence $\gamma_i(\infty) = \gamma_i(0) + \int_0^\infty \|\xi_i(t)\|_P^2 dt$ is finite. Therefore, the adaptive gains will converge to finite constants. The limiting values depend on the initial conditions and the particular switching path, but their boundedness is guaranteed.}
\end{IEEEproof}

\begin{remark}
	So far, we have proposed a new synchronization controller. Within this approach, the dynamics of multi-agent systems can be synchronized under real-time disconnected switching topologies. Our method does not rely on the boundedness of the leader system, nor does it rely on intermittently appearing connected graphs (directed graphs containing spanning trees).
\end{remark}


\section{Further applications of network transformation mapping}\label{sec5}

Network transformation mapping given in (\ref{b1})--(\ref{b2}) can also be used for switching topology problems in other situations. This section describes two scenarios to show the important role of network transformation mapping in promoting the development of switching topology problems. 

\subsection{Multi-agent systems under jointly connected communication topologies}

We first note that the conclusions in Sections \ref{sec3} and \ref{sec4} are fundamentally predicated on Assumption 1, which characterizes a kind of real-time disconnected switching topology network. This switching paradigm is structurally distinct from the classical jointly connected topology commonly studied in multi-agent coordination.

The switching topologies characterized by Assumption \ref{assum-n} are strictly weaker than joint connectivity in one critical aspect: they impose no requirement that the union graph over any fixed interval contains a spanning tree – a fundamental requirement for jointly connected topologies. Conversely, joint connectivity offers a distinct advantage by eliminating the directed path condition. Under Assumption \ref{assum-n}, each agent must have a directed path from the leader at some time instant (not necessarily persistently), whereas joint connectivity operates without such temporal constraints.

This analysis naturally raises a pivotal question: Can multi-agent synchronization be achieved under jointly connected topologies when tracking an unbounded reference signal? We affirm that when the jointly connected topology additionally satisfies the directed path condition, our framework provides controller parameter designs guaranteeing synchronization for unbounded multi-agent systems. Crucially, we emphasize that the directed path condition is sufficient but not necessary for synchronization via network transformation mapping. Following the proof methodology of Theorems \ref{thm1}-\ref{thm2}, we formally establish this result in the following corollary.



\begin{corollary}
Considering the multi-agent systems (\ref{sys}) subjected to Assumption \ref{assum1}--\ref{assum2} and designing the control law (\ref{u2}) with adaptive coupling gain (\ref{u3}), synchronization can be achieved if the jointly connected switching topologies satisfy the directed path condition.
\end{corollary}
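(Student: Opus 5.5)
The plan is to reuse the two-part structure of the proof of Theorem \ref{thm1}. The only ingredient that depended on the probabilistic model of Assumption \ref{assum-n} was the constant $m_c$, the long-run fraction of time during which agent $i$ lies in $\mathcal{V}_\star^\sigma$. Under joint connectivity together with the directed path condition, this fraction will be replaced by a deterministic lower bound.

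\textbf{Step 1: the dwell-time fraction.} Concretely, I take the directed path condition to mean the following. There exist $T>0$ and $\varepsilon\in(0,T]$ such that, in every window $[t,t+T)$, each agent $i$ is connected to the leader through a directed path in the instantaneous graph $\bar{\mathcal{E}}^\sigma$ for a total time of at least $\varepsilon$. On such intervals $\ell_i^\sigma=1$, so by Lemma \ref{trans} the agent belongs to $\mathcal{V}_\star^\sigma$. I then set $\bar m_c=\varepsilon/T$, which plays the role of $m_c$ in (\ref{dt}). In addition I need to bound the number of switchings per window, either by a dwell-time assumption or by the finiteness of $\mathcal{P}$. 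With that bound, the product $\prod_j\vartheta_{i,j}$ grows at most geometrically in the number of windows, and I absorb it into an extra exponential rate $c\ln\bar\vartheta/T$, where $\bar\vartheta=\max_\sigma\vartheta_{i,\tau}$ is finite because $\mathcal{P}$ is finite.

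\textbf{Step 2: the piecewise estimates.} On each interval where $\sigma$ is constant, Part 1 of the proof of Theorem \ref{thm1} (via Theorem \ref{thm2}) applies verbatim to agents in $\mathcal{V}_\star^\sigma$. It gives decay at rate $\wp_\star$ once $\underline\gamma$ is large. Step 1 of the proof of Theorem \ref{thm3} applies verbatim to agents in $\mathcal{V}_\diamond^\sigma$, giving growth at rate at most $\wp_\diamond$. Chaining these estimates as in (\ref{dt}) over windows of length $T$ yields
\begin{align*}
\|e_i(t)\|\le \vartheta_i\|e_i(t_0)\|\exp\{(-\bar m_c\wp_\star+(1-\bar m_c)\wp_\diamond+c\ln\bar\vartheta/T)\,t\}.
\end{align*}

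\textbf{Step 3: the adaptive gains.} As in Part 2 of the proof of Theorem \ref{thm1}, the gains $\gamma_i$ are nondecreasing by (\ref{u3}). As long as the exponent above is nonnegative, the error does not vanish, so $\int\|\xi_i\|_P^2$ keeps accumulating and the $\gamma_i$ keep growing. Since $\wp_\star$ grows roughly linearly in $\underline\gamma$ (the $\psi_1\underline\gamma^2$ term dominates), the exponent eventually becomes negative, with $m_c$ replaced by $\bar m_c$ in (\ref{sufficient}). From then on the error decays exponentially, and the $\gamma_i$ converge to finite limits.

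\textbf{Main obstacle.} I expect Step 3 to be the main difficulty. I must rule out the case where the $\gamma_i$ stop growing at a level that is still too small. The difficulty is that $\xi_i$ only measures the disagreement seen through transformed neighbors, so $\xi_i$ could be small while $e_i$ diverges when agent $i$ sits in $\mathcal{V}_\diamond^\sigma$. My first attempt would be to note that, on the guaranteed $\varepsilon$-intervals, $\xi_\star=(\mathcal{H}_\star^\sigma\otimes I_n)e_\star$ with $\mathcal{H}_\star^\sigma$ nonsingular. Hence $\|\xi_\star\|\ge\underline{\lambda}(\mathcal{H}_\star^\sigma)\|e_\star\|$, and a non-decaying $e_i$ forces $\int\|\xi_i\|_P^2=\infty$, which makes $\gamma_i$ unbounded. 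That contradicts the assumption that the gains remain below the threshold.
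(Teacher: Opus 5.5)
Your plan follows the paper's route. The paper gives no separate argument for this corollary beyond "repeat Theorems~\ref{thm2}, \ref{thm3}, \ref{thm1} with $m_c$ replaced by the time fraction computable from the known cycle", and your absorption of $\prod_j\vartheta_{i,j}$ into an exponential rate is more careful than (\ref{dt}). However, Step~3 rests on a claim that fails for the adaptive law: that $\wp_\star$ grows with $\underline{\gamma}$. In (\ref{u2})--(\ref{u3}), $K$ and $P$ are fixed by $\gamma_*$ before the gains adapt. Both $V_\star\geq\frac{1}{2}\underline{\lambda}(P)\sum_i\theta_i^\sigma\pi_i\|\xi_i\|^2$ and the decay term $(\gamma_*-1)\sum_i\theta_i^\sigma\pi_i\|\xi_i\|^2$ in $-\dot V_\star$ are first order in the gains. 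The certified rate is therefore at most $2(\gamma_*-1)/\underline{\lambda}(P)$, whatever the $\gamma_i$ are. In Theorem~\ref{thm2} the rate can be pushed up only because $K$ and $P$ are redesigned together with the gains through $\kappa_*$.

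This is not an artifact of the certificate. If $m<n$ and $KK^TB^TPB$ is nonsingular, then as $\pi_i\to\infty$ only $m$ eigenvalues of $A-\pi_iBKK^TB^TP$ go to $-\infty$. The other $n-m$ converge to the finite invariant zeros of $(A,B,KK^TB^TP)$. Take a suitable unstable $A$ with $m<n$, $f\equiv 0$, and two non-communicating agents that are pinned alternately for a small fraction of each period. Then each $e_i$ diverges however large the gains become. The gains do blow up, as your ``main obstacle'' argument shows, but unbounded gains are consistent with divergence, so no contradiction arises. The repair is to use the a priori knowledge of $\bar{m}_c$, which is exactly what the jointly connected case offers. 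Choose $\gamma_*$ and $K$ (hence $P$) so that the saturated rate already dominates $\frac{1-\bar{m}_c}{\bar{m}_c}\wp_\diamond$ plus the overshoot term. The adaptation then only needs to push the gains past a fixed threshold.

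Step~2 also fails as written. For $i\in\mathcal{V}_\diamond^\sigma$ the per-agent bound is fine, since the transformed graph has no arcs there. For $i\in\mathcal{V}_\star^\sigma$, however, the Lyapunov estimate is on the stacked vector. It yields $\|e_i(t)\|\leq C\|e_\star(t_m)\|e^{-\wp_\star(t-t_m)}$, not $C\|e_i(t_m)\|e^{-\wp_\star(t-t_m)}$: an agent with $e_i(t_m)=0$ immediately picks up error from a reachable neighbour with $e_j(t_m)\neq 0$. Since $\mathcal{V}_\star^\sigma$ changes at every switch, each reachable interval resets agent $i$'s bound to the current errors of its reachable mates, which may have grown while they were unreachable. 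Reachable time of total measure $\varepsilon$, split into short pieces, therefore does not produce the factor $e^{-\bar{m}_c\wp_\star T}$; the same weakness is present in (\ref{dt}).

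A workable version tracks $\max_i\|e_i\|$ window by window. It needs each agent to have, in every window, one reachable interval of guaranteed length $\Delta$ with $\wp_\star\Delta$ exceeding $\wp_\diamond T$ plus the logarithm of the accumulated overshoots. That requires a genuine dwell-time hypothesis. Finiteness of $\mathcal{P}$ bounds neither the number of switchings per window nor, in the adaptive setting, the gain ratio $\bar{\gamma}/\underline{\gamma}$ that enters $\vartheta_{i,\tau}$.
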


Compared with the case described in Assumption \ref{assum-n}, the jointly connected topology satisfying the directed path condition will naturally include some simpler processing procedures because it is a special case. Since most of the literature concerning joint connectivity assumes that a series of different topological networks appear circularly, there will be the following benefits.

1)~The parameter calculation is simpler: the coupling gain is easy to calculate even if there is no adaptive strategy. In Section \ref{sec3}, the main reason why the coupling gain is difficult to calculate is the difficulty in solving the parameter $m_k$, i.e., the length of time of $k\in\mathcal{V}_\star^\sigma$. Although we have given an estimation method before Theorem \ref{thm1}, it has highly conservative and thus inaccuracy. Therefore, the adaptive strategy is actually necessary for the case of Assumption \ref{assum-n}. However, the calculation of $m_k$ is easy in the case of jointly connected switching topologies. Since all possible communication networks have been known in advance, and they will all appear in a fixed cycle, it is easy to calculate the time proportion of $k\in\mathcal{V}_\star^\sigma$, i.e., $m_k$.

2)~The efficiency of network transformation mapping is higher. In the case of Assumption \ref{assum-n}, we cannot get the transformed network topology parameters based on the network transformation mapping in advance because it is impossible to know what the communication network will be at the next time. Therefore, a completely distributed algorithm (Algorithm \ref{alg1}) is designed to help each node calculate the network transformation mapping in real time. However, all possible communication networks can be known in advance when the topologies are jointly connected. Consequently, the transformed network parameters can be calculated in advance and called at any time during the operation of the system. Therefore, in this scenario, Algorithm \ref{alg1} is not necessary to the transformed network, but can be replaced by (\ref{b1})--(\ref{b2}).

\subsection{Distributed observer and heterogeneous multi-agent systems}

This subsection discusses the application of network transformation mapping in distributed observers targeting external systems (or leader systems). Such distributed observers serve as an effective tool for addressing cooperative control in heterogeneous multi-agent systems. Analogous to multi-agent systems, conventional approaches fail to guarantee convergence of the distributed observer's error dynamics under real-time disconnected switching topologies when the observed external system (or leader system) is unbounded. The network transformation mapping proposed in this paper resolves this fundamental challenge.

Assume an external system governed by
\begin{align}
	&\dot{x}_0=Ax_0+f(x_0),\label{sys21}\\
	&y_0=Cx_0,\label{sys22}
\end{align} 
where $x_0\in\mathbb{R}^n$, $y\in\mathbb{R}^p$ are system states and measurement outputs respectively; $f(x_0)$ is a nonlinear vector field subjected to Assumption \ref{assum1}; $A,C$ are matrices with appropriate dimension and the pair $(C,A)$ is observable. The distributed observer is an observer network including a group of local observers (agents) and a communication network. Only a part of the agents have access to the external system's outputs (or leader's output) and all the agents are supposed to reconstruct the external system's states via information exchanged with their neighbors. 

The local observer located at each agent gives rise to
\begin{align}
	&\dot{\hat{x}}_i=A\hat{x}_i+f(\hat{x}_i)+\kappa_i\mathcal{K}\zeta_i,\label{do21}\\
	&\zeta_i=\sum_{j=1}^N\beta_{ij}^\sigma\left(C\hat{x}_j-C\hat{x}_i\right)+\iota_i\left(Cx_0-C\hat{x}_i\right),\label{do22}
\end{align}
where $\hat{x}_i\in\mathbb{R}^n$ is the state estimation of $x_0$, $\kappa_i$ and $\mathcal{K}$ are coupling gain and observer gain respectively; $\beta_{ij}^\sigma$ is obtained by network transformation mapping (\ref{b1}). 

Let $e_i=\hat{x}_i-x$, and then we have
\begin{align}\label{error-do}
	\dot{e}_i&=Ae_i+f(x_i)-f(x)\notag\\
	&+\kappa_i\mathcal{K}C\left(\sum_{j=1}^N\beta_{ij}^\sigma(e_j-e_i)-\iota_ie_i\right).
\end{align}
It has a completely dual form with (\ref{error-mas}), so its stability under the switching topologies subjected to Assumption \ref{assum-n} or jointly connected switching topologies subject to directed path condition can be proved in a similar way to Theorem \ref{thm2} and Theorem \ref{thm3}. The only difference is $\mathcal{K}=P^{-1}C^TK^TP^2KC$, where $K$ is chosen so that $A-\kappa_*KC$ is Hurwitz and symmetric positive definite matrix $P$ is solved by $sym\{P(A-\kappa_*KC)\}=-\kappa_*I_n$ with $\kappa_*$ being a positive constant.

Alternatively, the local observer can be designed with adaptive coupling gain:
\begin{align}
	&\dot{\hat{x}}_i=A\hat{x}_i+f(\hat{x}_i)+(\gamma_i+\omega_i)\mathcal{K}\zeta_i,\notag\\
	&\zeta_i=\sum_{j=1}^N\beta_{ij}^\sigma\left(C\hat{x}_j-C\hat{x}_i\right)+\iota_i\left(Cx-C\hat{x}_i\right),\label{do31}\\
	&\dot{\gamma}_i=\omega_i=\left\|\zeta_i\right\|_P^2.\label{do32}
\end{align}
The stability of its error dynamics under real-time disconnected switching network can be obtained by imitating the proof of Theorem \ref{thm1}. Now, one may state the following.

\begin{corollary}\label{c2}
	Suppose the external system satisfy Assumption \ref{assum1} and the switching topologies satisfy Assumption \ref{assum-n} (or jointly connected switching topologies with directed path condition). Then, distributed observer (\ref{do21})--(\ref{do22}) (or distributed observer (\ref{do31})--(\ref{do32}) achieve $\lim_{t\to\infty}\|x-x_i\|=0$ with large enough coupling gain (or adaptive coupling gain).
\end{corollary}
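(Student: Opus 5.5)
The proof will mirror the multi-agent case by duality. First I will write the estimation error $e_i=\hat{x}_i-x_0$ in stacked form. By Lemma \ref{trans}, for each frozen $\sigma$ the transformed graph splits into $\mathcal{G}_\star^\sigma$ and $\mathcal{G}_\diamond^\sigma$. On $\mathcal{V}_\star^\sigma$ the error obeys
\begin{align*}
\dot{e}_\star=(I_{\pi(\sigma)}\otimes A)e_\star+\tilde{f}_\star-(\Upsilon_\star^\sigma\mathcal{H}_\star^\sigma\otimes \mathcal{K}C)e_\star .
\end{align*}
On $\mathcal{V}_\diamond^\sigma$ every $\beta_{ij}^\sigma$ and $\iota_i^\sigma$ vanishes, so $\dot{e}_\diamond=(I\otimes A)e_\diamond+\tilde{f}_\diamond$.

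\textbf{Convergence on the reachable part.} Since every node of $\mathcal{G}_\star^\sigma$ is reachable from a pinning node, $\mathcal{H}_\star^\sigma$ is a nonsingular M-matrix. Lemma \ref{M1} then gives a diagonal $\Theta_\star^\sigma>0$ with $sym\{\Theta_\star^\sigma\mathcal{H}_\star^\sigma\}>\kappa_* I$. I will use $V_1=e_\star^T(\Theta_\star^\sigma\Upsilon_\star^{\sigma\,-1}\otimes P)e_\star$ with the observer gain $\mathcal{K}=P^{-1}C^TK^TP^2KC$, or more simply $\mathcal{K}=P^{-1}C^T$, with $P$ from the dual Lyapunov equation for $A-\kappa_*KC$. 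The coupling term then becomes $-e_\star^T(sym\{\Theta_\star^\sigma\mathcal{H}_\star^\sigma\}\otimes C^TC)e_\star$, which is negative and dominates the unstable part once the gains are large. The Lipschitz term is bounded using Assumption \ref{assum1} exactly as in Theorem \ref{thm2}. This yields an exponential rate $\wp_\star$ that grows with $\underline{\kappa}_\star^\sigma$.

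\textbf{Main obstacle: sharpening the coupling estimate.} The difficulty is that the coupling only acts through $C^TC$, which is singular. To absorb $A$ I need observability of $(C,A)$: I will pick $P$ and $K$ so that $sym\{PA\}-2\kappa_*C^TC\le-\kappa_*I$. Such a choice exists by observability, since one can take the dual of the controllability Riccati construction. I expect this step to require the most care.

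\textbf{Divergence on the unreachable part.} On $\mathcal{V}_\diamond^\sigma$ the bound of step 1) in Theorem \ref{thm3} applies verbatim, giving a growth rate no larger than $\wp_\diamond$.

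\textbf{Gluing the intervals.} Next I will use Remark \ref{probability}: under Assumption \ref{assum-n} each node lies in $\mathcal{V}_\star^\sigma$ for a fraction of time at least $m_c$. For jointly connected topologies satisfying the directed path condition, this fraction is instead the cycle proportion $m_k$. Concatenating the bounds as in (\ref{dt}) gives
\begin{align*}
\|e_i(t)\|\le\vartheta_i\|e_i(0)\|\exp\{(-m_c\wp_\star+(1-m_c)\wp_\diamond)t\}.
\end{align*}
Choosing the coupling gains large enough that $\wp_\star>\frac{1-m_c}{m_c}\wp_\diamond$ then gives $e_i\to0$. As in (\ref{dt}), this requires $\prod_j\vartheta_{i,j}$ to stay bounded. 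I will enforce that by using a common $\Theta$ over the finite set $\mathcal{P}$, or by absorbing the product into the exponent through a dwell-time average.

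\textbf{Adaptive observer (\ref{do31})--(\ref{do32}).} Here I will follow Theorem \ref{thm1}. The gain $\gamma_i$ is nondecreasing, with $\dot\gamma_i=\|\zeta_i\|_P^2$. If the errors did not converge, $\zeta_i$ would not be square-integrable whenever $i\in\mathcal{V}_\star^\sigma$, and that occurs for a positive fraction of time. Then $\gamma_i$ would grow until the fixed-gain sufficient condition holds, after which the previous argument gives exponential convergence. Consequently $\int\|\zeta_i\|_P^2<\infty$ and $\gamma_i$ converges to a finite limit.
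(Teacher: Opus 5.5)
Your route is the paper's: its entire argument for Corollary~\ref{c2} is that (\ref{error-do}) is dual to (\ref{error-mas}), so Theorems~\ref{thm2}, \ref{thm3} and \ref{thm1} carry over, and you fill in that sketch. Your $\mathcal{K}=P^{-1}C^T$ is the sensible reading, since the paper's $P^{-1}C^TK^TP^2KC$ is $n\times n$ and cannot act on $\zeta_i\in\mathbb{R}^p$ unless $p=n$.

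You rightly flag that $C^TC$ is singular, but the step that fails is your claim that $\wp_\star$ grows with $\underline{\kappa}_\star^\sigma$. The appeal to duality does not address this either. In your $\dot V_1$ the drift contributes $e_\star^T(\Theta_\star^\sigma(\Upsilon_\star^\sigma)^{-1}\otimes sym\{PA\})e_\star$, and the coupling contributes $-e_\star^T(sym\{\Theta_\star^\sigma\mathcal{H}_\star^\sigma\}\otimes C^TC)e_\star$. On $e_\star=v\otimes x$ with $Cx=0$ the coupling term vanishes, while $V_1$ and the drift carry the same weight $v^T\Theta_\star^\sigma(\Upsilon_\star^\sigma)^{-1}v$. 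So any decay rate of $V_1$ certified this way is at most $-\max_{Cx=0,\,x\neq 0}x^Tsym\{PA\}x/x^TPx$, whatever the gains.

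This is not a Lyapunov artifact. Take a single pinned node with $f=0$, $A=\left[\begin{smallmatrix}0&1\\0&0\end{smallmatrix}\right]$, $C=[1~~0]$, and $\mathcal{K}=[k_1~~k_2]^T$ with $k_1>0$. The eigenvalues of $A-\lambda\mathcal{K}C$ are the roots of $s^2+\lambda k_1s+\lambda k_2$, and one of them tends to $-k_2/k_1$ as $\lambda\to\infty$.

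Your LMI $sym\{PA\}-2\kappa_*C^TC\le-\kappa_*I$ does not help. It is equivalent to $sym\{P_1A\}-2C^TC\le-I$ with $P_1=P/\kappa_*$, so it buys only a fixed margin. The Lipschitz term then needs an extra condition such as $2\sqrt{n}\rho\bar{\lambda}(P_1)<1$. Hence $\wp_\star>\frac{1-m_c}{m_c}\wp_\diamond$ cannot be forced by raising gains when $m_c$ is small. You would have to design $P$ for a prescribed decay rate and add a hypothesis linking $\rho$ to that design.

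Some such hypothesis is unavoidable. With the same $A,C$ and $f(x)=[-x_2~~0]^T$ (Lipschitz, $(C,A)$ observable), $Ax+f(x)\equiv0$. The observer trajectories then do not depend on $x_{0,2}$, so no gains or topology give $\hat{x}_i\to x_0$. Appending a measured state $\dot{x}_3=x_3$ makes $x_0$ unbounded without changing this.

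The concatenation has a second gap, inherited from (\ref{dt}). The estimate on $\mathcal{V}_\star^\sigma$ bounds the stacked $e_\star(t)$ by $e_\star(t_m)$, not $\|e_i(t)\|$ by $\|e_i(t_m)\|$. A node reached through a chain is driven toward its in-neighbours' errors, which may have grown while those neighbours were unreachable. Since $\mathcal{V}_\star^\sigma$ changes at switches and Assumption~\ref{assum-n} keeps $\mathcal{V}_\diamond^\sigma$ nonempty at every instant, the two estimates never combine into a global function guaranteed to decrease on an interval. Per-node chaining therefore needs another device, e.g.\ bounding $\max_i\|e_i\|$ over windows in which every node has been reachable long enough for contraction to beat growth.

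You rightly flag $\prod_j\vartheta_{i,j}$, but your remedies do not close it. A common $\Theta$ does not remove the overshoot constants, because the index sets and $\mathcal{H}_\star^\sigma$ change. Absorbing them into the exponent, with switches occurring at a positive rate, again needs a large rate margin, which is what fails above. Also, $m_c$ is a stationary probability, so the time-fraction bound holds only almost surely as $t\to\infty$. Your adaptive argument assumes the fixed-gain condition is reachable by gain growth, so it inherits both gaps.
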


With regard to this subsection and Corollary \ref{c2}, the following explanations need to be added.

\begin{figure}[!t]
	\centering
	\includegraphics[width=8cm]{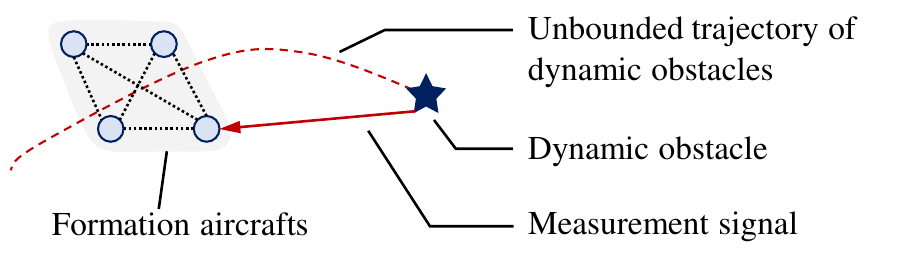}\\
	\caption{An example of the distributed observer in unbounded leader case}\label{dis-observer}
\end{figure}

Recently, a large number of articles focused on distributed observer against switching topologies \cite{10.1007/978-3-030-22808-8_3,HE2021110021,8543510}. However, similar to the multi-agent problem, the existing methods cannot solve the problem that the leader system state is unbounded and the communication topology does not contain a spanning tree at any time. The issue of the distributed observer in the case of an unbounded leader state is not rare. For example, Fig. \ref{dis-observer} shows a formation avoiding dynamic obstacles. The agents in the formation need to prevent a dynamic obstacle, while only some agents can measure the obstacle information. Other agents need to estimate the obstacle dynamics through the distributed observer network and then take avoidance actions based on their estimation. In this scenario, it is obvious that the trajectory of dynamic obstacles cannot be assumed to be bounded. Therefore, Corollary \ref{c2} is very valuable when encountering similar problems. It also reflects the value of the network transformation mapping proposed in this paper.

Finally, we would like to discuss the heterogeneous multi-agent systems based on distributed observer. In this scenario, each follower reconstructs the state of the leader system through the distributed observer, and then a pure decentralized control law for keeping synchronization can be established based on the state estimation. Therefore, the distributed-observer-based distributed control law can solve the related problems of heterogeneous multi-agent systems. It is equivalent to solving the synchronization  problem of heterogeneous multi-agent systems by using the homogeneity multi-agent method. The network transformation mapping proposed in this paper has proved that it can effectively deal with homogeneous multi-agent systems and distributed observers. Consequently, it can also be applied to heterogeneous multi-agent systems through distributed observers. This shows that the method proposed in this paper is a general method when the reference system (leader) is unbounded, and the communication network does not contain a spanning tree at any time. It can play a significant role whether facing homogeneous multi-agent systems, distributed observer systems, or heterogeneous multi-agent systems. In the future, it is also expected to play a decisive role when dealing with switching topologies in more fields.
\begin{figure*}[!t]
	\centering
	\includegraphics[width=16cm]{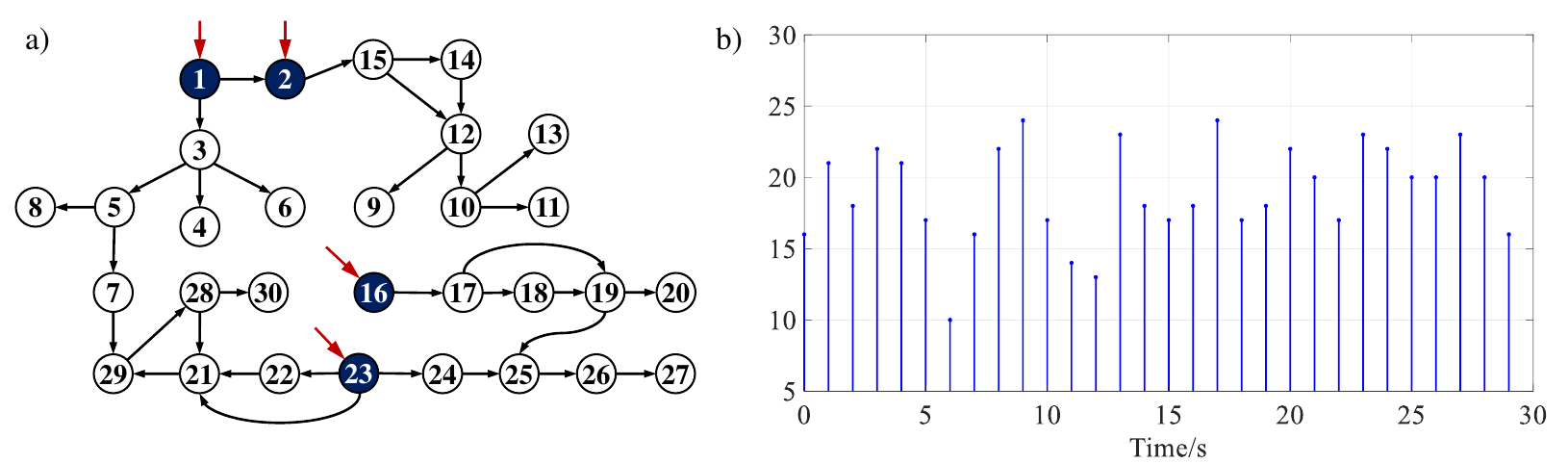}\\
	\caption{Communication networks among all agents. a) The topology when all links work normal, and all arcs, whether red or black, have a probability of interruption; b) Number of links that have no access to virtual leader}\label{network}
\end{figure*}

\begin{figure*}[!t]
	\centering
	\includegraphics[width=16cm]{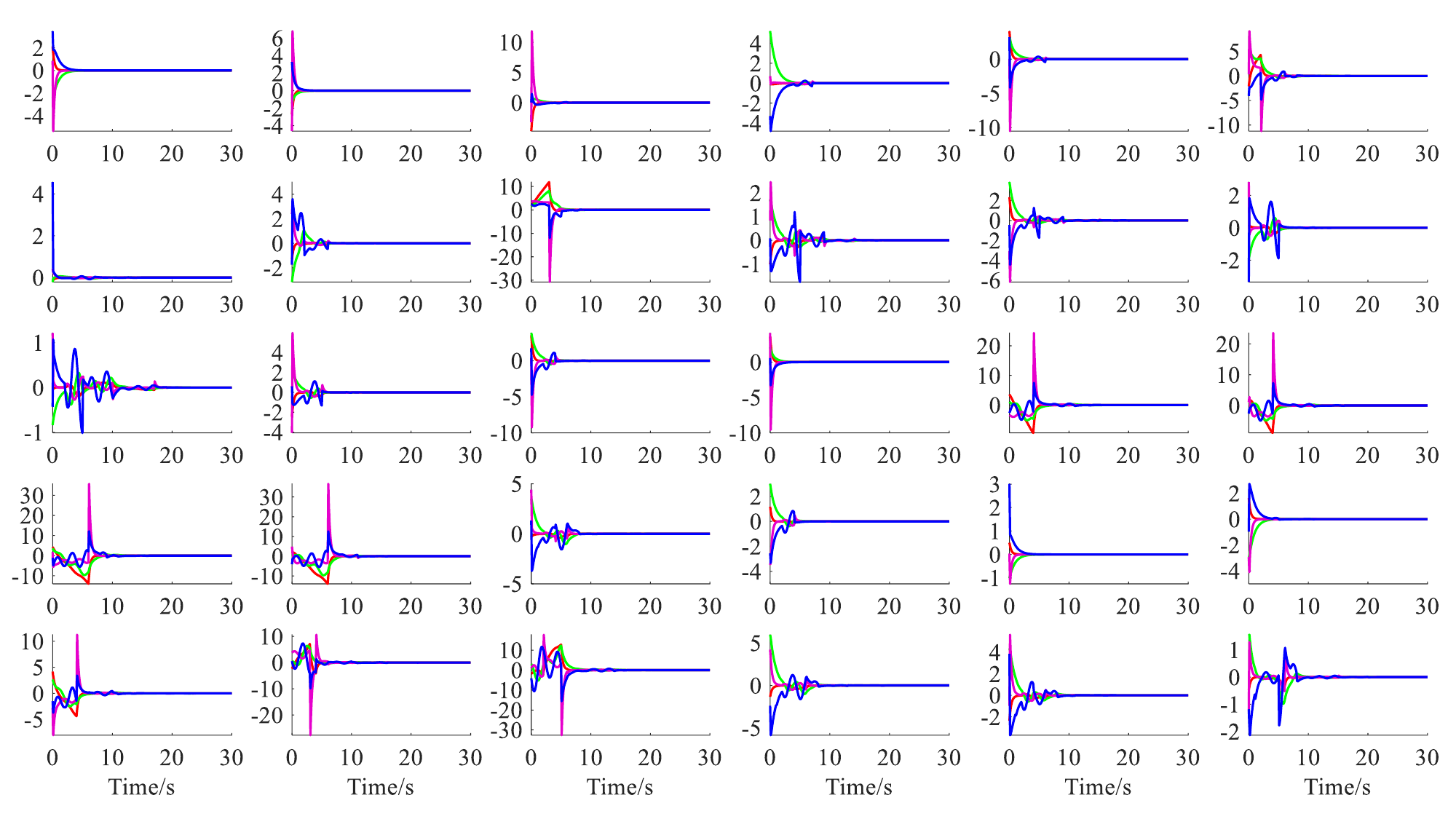}\\
	\caption{Time varying trajectories of error dynamics for each agent}\label{error-dynamics}
\end{figure*}

\begin{figure}[!t]
	\centering
	\includegraphics[width=8cm]{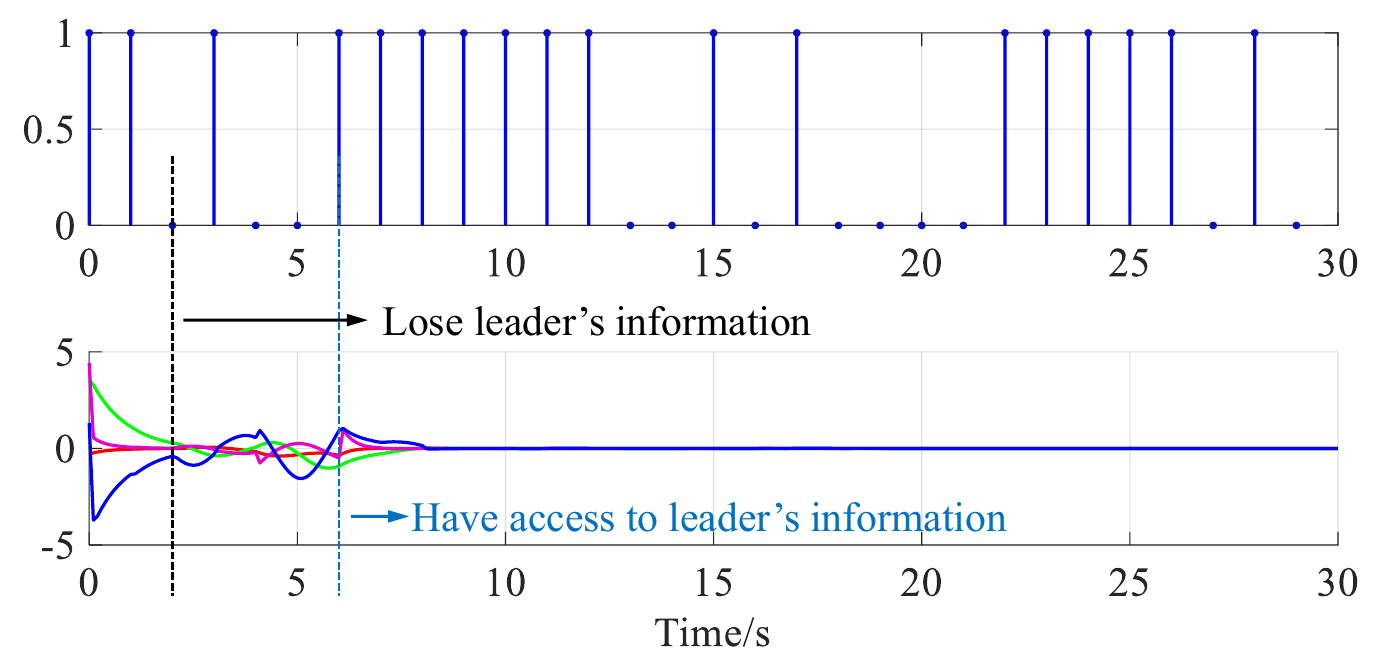}\\
	\caption{Relationship between dynamic performance and the structure communication topology. {\color{blue}In the subfigure, at time $t$, the vertical axis value is $1$ if the agent is a descendant of the leader during $[t, t+1)$, and $0$ otherwise.}}\label{follower-states}
\end{figure}

\begin{figure}[!t]
	\centering
	\includegraphics[width=8cm]{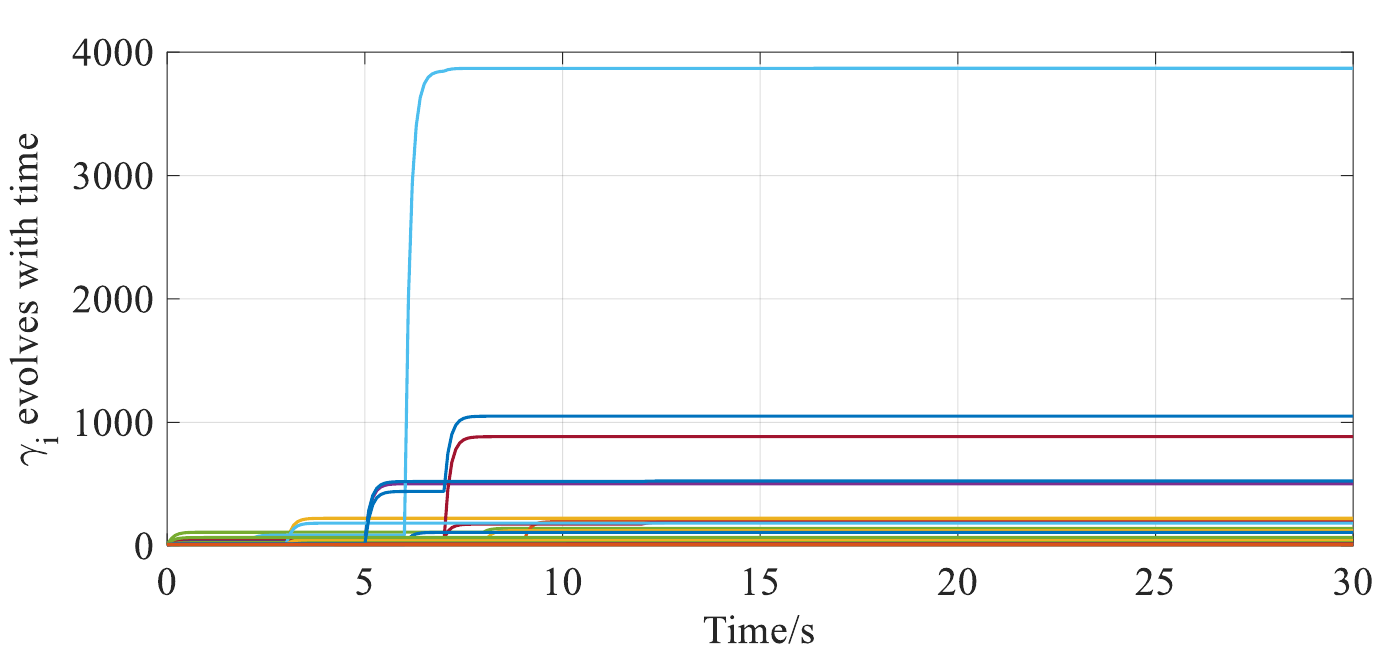}\\
	\caption{Time varying trajectory of coupling gain}\label{coupling}
\end{figure}



\section{Simulation}\label{sec6}

This section considers two simulations. The multi-agent systems composed of 30 one-link manipulators are employed to show the effectiveness of the methods developed by Theorem \ref{thm2}--\ref{thm1}. Then, Section \ref{sec5.2} studies a multi-motor synchronous control system based on Fieldbus to display the function of network transformation mapping when the communication network obeys joint connectivity property. Since the distributed observer is a dual problem of multi-agent, it is no longer simulated for Corollary \ref{c2}.

\subsection{Multi-agent systems of one-link manipulators}\label{sec5.1}
The $i$th one-link manipulator with the elastic shaft is governed by the dynamics
\begin{align}
	\dot{x}_i=Ax_i+f(x_i)+Bu_i,~i=1,\ldots,30,\label{sys-simu}
\end{align}
where
\begin{align}
	&A=\begin{bmatrix}
		 0     &    0  &  1 &        0\\
		0       &  0   &      0  &  1\\
		-1.13 &   1.33 &  -0.10  &   0\\
		3.33  & -5   &      0  & 0.28
	\end{bmatrix},\\
	&B=\begin{bmatrix}
		0&1&0&0\\0&0&0&1
	\end{bmatrix}^T,\\
	&f(x_i)=\begin{bmatrix}
		0&0&1/5&\frac{1}{2}\cos(\frac{x_{i2}}{2})
	\end{bmatrix}^T.
\end{align}
Noticed that there exists a constant external torque $1/5$ in nonlinear term $f(x_i)$, which makes the reference system $\dot{x}_0=Ax_0+f(x_0)$ be the open-loop unbounded system. Therefore, systems (\ref{sys-simu}) are more conducive to showing the contribution of this paper. Specifically, we will show the synchronization of (\ref{sys-simu}) against the switching topologies without relying on a specific switching mode or a special graph (such as a graph containing a spanning tree).

To this end, the completely healthy communication network between all agents is shown in Fig. \ref{network}a), where agents $1,2,16,23$ are pinning nodes. At the initial time, we randomly disconnect $10$ links in the graph to ensure that there is no directed spanning tree at the initial time. Then, each broken link will be repaired after a random time interval obeying negative exponential distribution with parameter $1.5$. In addition, each normal link may fail after a random time interval which obeys negative exponential distribution with parameter $1$.

Fig. \ref{network}b) shows the number changing over time of agents that cannot directly or briefly obtain the virtual leader information. As can be seen from the figure, in most of the time, most nodes have no access to virtual leader information directly or indirectly.
Therefore, it is a challenging simulation environment because the cited multi-agent systems are open-loop unbounded, and the real-time disconnected communication topology evolves without any periodicity or subjectivity (fast switching). 

By designing distributed control law from (\ref{u1}) and (\ref{u2}) as well as the network transformation (\ref{b1})--(\ref{b2}), the dynamics of all 30 agents are shown in Fig. \ref{error-dynamics}, which indicates the availability of our method.

In order to explain the internal operation mechanism of our method in more detail, we draw Fig. \ref{follower-states} to show the variation of error dynamics with the change in the communication network. It is the details of the 21st agent located at row $4$ column $3$ in Fig. \ref{error-dynamics}. In Fig. \ref{follower-states}, the first subfigure explains when this agent has access to the leader's information. By comparing the first subfigure with the second subfigure, one may notice the error dynamics have a convergence trend at the initial time owing to its access to the leader's information. Then, divergence occurs at $t=3$ since the path between the leader and the $21$st agent is broken. In this time interval, the dynamics of the $21$st agent are influenced by other agents that have not been able to achieve synchronization. Finally, error dynamics converge again at $t=6$ because the path is repaired. This illustrates the necessity of designing the convergence rate in Theorem 2.

Finally, Fig. \ref{coupling} displays the time-varying trajectories of coupling gains $\gamma_i$ for $i=1,\ldots,N$. This result verifies Theorem \ref{thm3} and illustrates the effectiveness of adaptive coupling gain, which means the amount of calculation in the process of parameter design is greatly reduced.
\begin{figure*}[!t]
	\centering
	\includegraphics[width=0.9\textwidth]{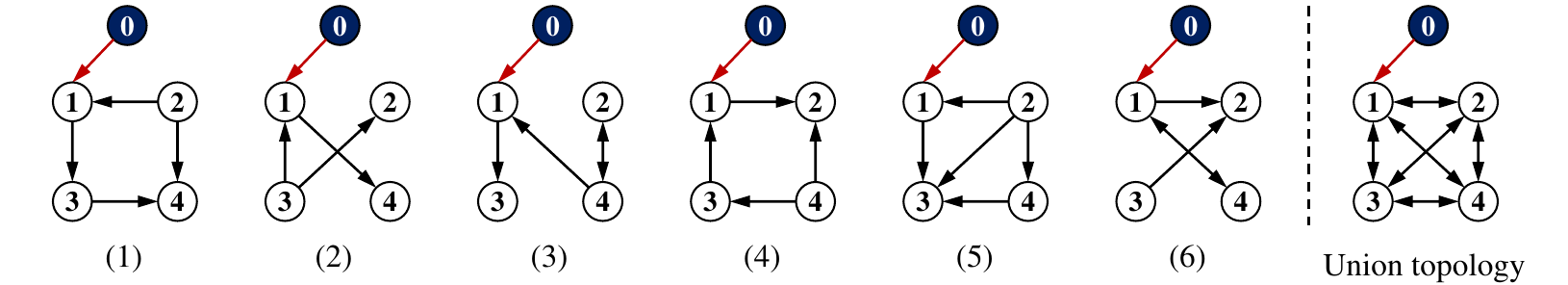}\\
	\caption{Communication networks among four agents}\label{graph}
\end{figure*}
\begin{figure*}[!t]
	\centering
	\includegraphics[width=0.9\textwidth]{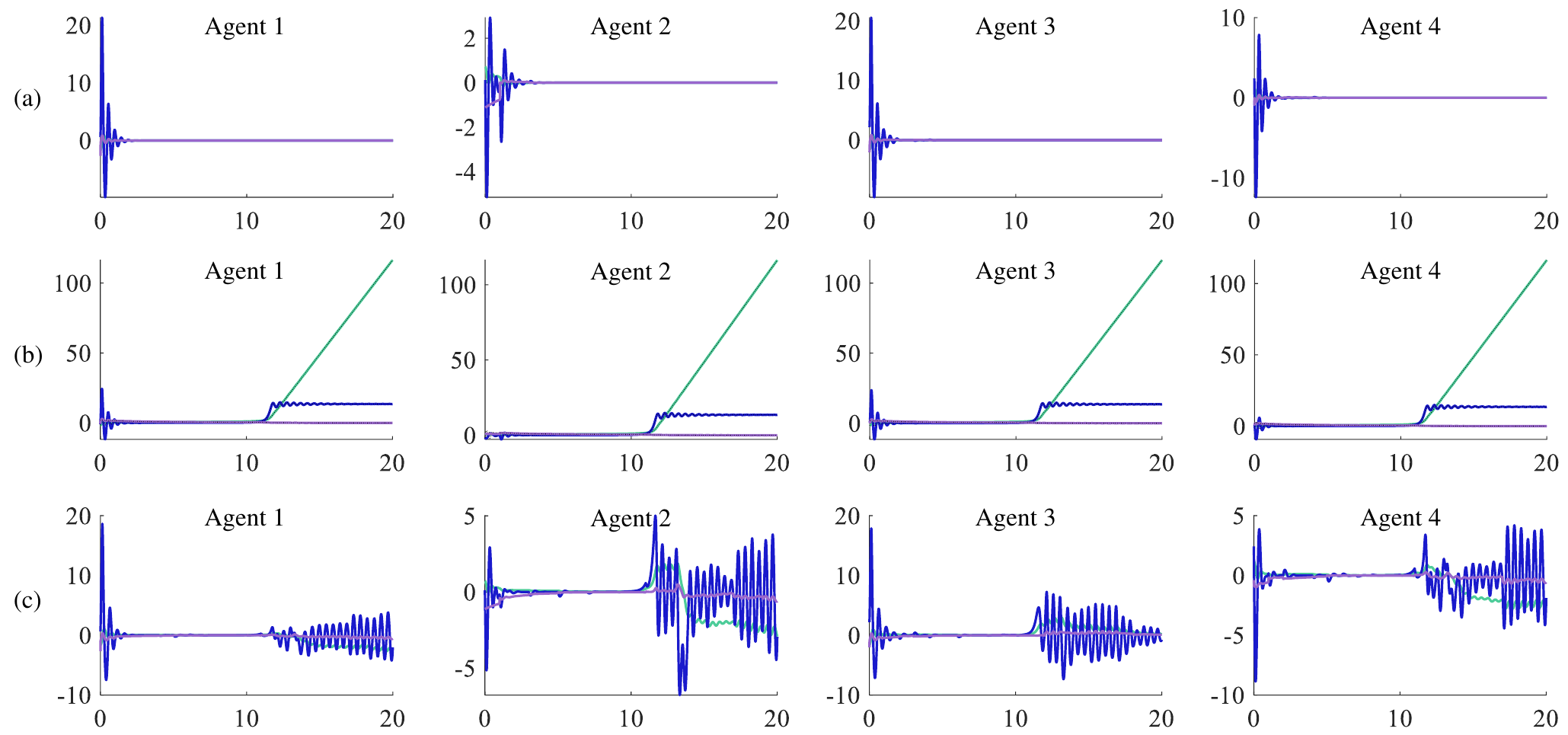}\\
	\caption{Time varying trajectories of error dynamics for each agent. Line a) shows the tracking errors of the control protocol with network transformation. And Line b) is the associated  trajectories of four agents. Line c) shows the error dynamics of control protocol without network transformation.}\label{error-dynamics-com}
\end{figure*}

\subsection{Multi-motor synchronous control systems based on Fieldbus}\label{sec5.2}

A multi-motor synchronous control system connected with Fieldbus gives rise to
\begin{align}
	&M\ddot{\delta}_i=P_W-D\dot{\delta}_i-\eta_1 E_q\sin{\delta_i}\notag\\
	&\varrho\dot{E}_{i,q}=-\eta_2E_q+\eta_3\cos{\delta_i}+E_{i,FD}.
\end{align}
In this system, $\delta_i$ stands for angle (rad); $E_{i,q}$ is voltage; $P_W$ is the mechanical input power; $D$ and $M$ represent damping coefficient and inertia coefficient respectively; $\eta_1$, $\eta_2$, and $\eta_3$ are constants; and $E_{i,FD}$, the field voltage, is regarded as the control input. By letting $x_{i,1}=\delta_i$, $x_{i,2}=\dot{\delta}_i$, $x_{i,3}=E_{i,q}$, and $u_i=E_{i,FD}$, we have
\begin{align}
	\dot{x}_i=Ax_i+f(x_i)+Bu_i,
\end{align} 
where $x_i=col\{x_{i,1},x_{i,2},x_{i,3}\}$, and
\begin{align}
	&A=\begin{bmatrix}0&1&0\\0&\frac{D}{M}&\frac{\eta_1}{M}\\0&0& -\frac{\eta_2}{\varrho}\end{bmatrix},~B=\begin{bmatrix}0\\0\\ \frac{1}{\varrho}\end{bmatrix},\notag\\
	&f(x_i)=\begin{bmatrix}0\\ \frac{P_W}{M}-\frac{\eta_1}{M}x_{i,3}-\frac{\eta_1}{M}x_{i,3}\sin{x_{i,1}}\\ \frac{\eta_3}{\varrho}\cos{x_{i,1}}\end{bmatrix}.
\end{align}
The leader system is governed by
\begin{align}
	\dot{x}_0=Ax_0+f(x_0).
\end{align} 

To facilitate comparison, this section considers $4$ followers, and the communication networks among them are shown in Fig. \ref{graph}. It is assumed that $6$ networks in this figure appear in turn with a time interval of $1$. The parameters of this simulation are set as $P_W=0.815$, $M=0.0147$, $D=4M$, $\varrho=6.6$, and $\eta_1=2$, $\eta_2=2.7$, $\eta_3=1.7$.

Fig.~\ref{error-dynamics-com} presents the simulation results. Line a) displays the error dynamics and line b) the state trajectories under the control protocol with network transformation. The error dynamics in line a) converge to zero, while the trajectories in line b) are unbounded. The subfigures in line b) indicate that the reference signal is bounded for the first $11$ seconds and becomes unbounded thereafter. This property helps illustrate the role of network transformation mapping. As shown in line c) (it shows the error dynamics of control protocol without network transformation), synchronization is achieved for $t<11$, but the error dynamics tend to diverge for $t>11$. This is because, after $11$ seconds, nodes without access to the leader's information interfere with those that do have access when the reference signal becomes unbounded, leading to synchronization failure. By contrast, the control protocol with network transformation mapping (line a)) successfully overcomes this problem. 



\section{Conclusions}\label{sec7}

This paper has addressed the problem of multi-agent synchronization under switching topologies that never contain a spanning tree, with an unbounded reference signal. Communication links have been assumed subject to stochastic failures/attacks modeled by negative exponential distributions. In this setting, conventional methods such as fast switching and intermittent communication have become no longer applicable. To overcome this, we have proposed a novel synchronization controller based on network transformation mapping, which achieves synchronization with an unbounded reference signal. Adaptive coupling gain has been introduced to avoid complex gain calculations. Finally, two numerical examples—multi‑one‑link manipulator systems and multi‑motor synchronous control systems based on Fieldbus—have verified the effectiveness of the developed strategy.

\appendices \section{Proof of Lemma \ref{trans}}\label{app-L3}
\begin{IEEEproof}
		\color{blue}We first recall the definition of $\ell_{i}^\sigma$: $\ell_{ik}^\sigma = 1$ if and only if there exists a path from some pinning node $k_j \in \mathcal{I}^\sigma$ to node $i$; otherwise $\ell_{i}^\sigma = 0$. Based on this, we partition the node set $\mathcal{V}$ into two separated subsets $\mathcal{V}_{\star}^\sigma = \{ i \mid \ell_{i}^\sigma = 1 \}$ and $\mathcal{V}_{\diamond}^\sigma = \{ i \mid \ell_{i}^\sigma = 0 \}$.	By construction, $\mathcal{V}_{\star}^\sigma$ is exactly the set of all pinning nodes and all nodes that can reach some pinning node (i.e., pinning nodes and their descendants), which is precisely the node set of $\mathcal{G}_\star^\sigma$. The complement $\mathcal{V}_{\diamond}^\sigma$ is the node set of $\mathcal{G}_\diamond^\sigma$.
		
		Now, we examine whether the transformed adjacency weights $\beta_{ij}^\sigma = \alpha_{ij}^\sigma \delta_{ij}^\sigma$ meet the requirement of Lemma \ref{trans}. To this end, we consider all possible cases for the pair $(i,j)$.
		
		\textbf{Case 1:} Both $i$ and $j$ are in $\mathcal{V}_{\star}^\sigma$. For any $i,j \in \mathcal{V}_{\star}^\sigma$, we have $\ell_{i}^\sigma = 1$ and $\ell_{j}^\sigma = 1$ by definition. Hence, based on (\ref{b2}), we obtain $\delta_{ij}^\sigma = 1 \wedge 1 = 1$, so $\beta_{ij}^\sigma = \alpha_{ij}^\sigma$.
		Thus every arc that originally existed from $j$ to $i$ (i.e., $\alpha_{ij}^\sigma = 1$) remains unchanged; arcs that did not exist remain absent.
		
		\textbf{Case 2:} Both $i$ and $j$ are in $\mathcal{V}_{\diamond}^\sigma$.
		For $i,j \in \mathcal{V}_{\diamond}^\sigma$, we have $\ell_{i}^\sigma = 0$ and $\ell_{j}^\sigma = 0$, so $\delta_{ij}^\sigma = 0 \wedge 0 = 0$ and hence $\beta_{ij}^\sigma = 0$.
		It means all arcs inside $\mathcal{V}_{\diamond}^\sigma$ are removed, regardless of the original $\alpha_{ij}^\sigma$.
		
		\textbf{Case 3:} One node is in $\mathcal{V}_{\star}^\sigma$ and the other in $\mathcal{V}_{\diamond}^\sigma$. 
		Without loss of generality, suppose $i \in \mathcal{V}_{\star}^\sigma$ and $j \in \mathcal{V}_{\diamond}^\sigma$. Then $\ell_{i}^\sigma = 1$, $\ell_{j}^\sigma = 0$ giving $\delta_{ij}^\sigma = 1 \wedge 0 = 0$, which leads to $\beta_{ij}^\sigma = 0$. Hence, all arcs from $\mathcal{V}_{\star}^\sigma$ to $\mathcal{V}_{\diamond}^\sigma$ and from $\mathcal{V}_{\diamond}^\sigma$ to $\mathcal{V}_{\star}^\sigma$ are eliminated.
		
		Consequently, after the transformation: arcs inside $\mathcal{G}_\star^\sigma$ are preserved exactly as in the original graph; arcs inside $\mathcal{G}_\diamond^\sigma$ are completely removed; and all arcs between $\mathcal{G}_\star^\sigma$ and $\mathcal{G}_\diamond^\sigma$ (in either direction) are also completely removed. This completes the proof.
\end{IEEEproof}

\section{Proof of Theorem \ref{thm2}}\label{app-Th1}
\begin{IEEEproof}
		Denote $e_i=x_i-x_0$ as the tracking error of the $i$th node. Then, two categories of graph leads to the compact form $e_\star=col\{e_i,~i\in\mathcal{V}_\star^\sigma\}$, and $e_\diamond=col\{e_i,~i\in\mathcal{V}_\diamond^\sigma\}$. Let $\xi_{i}=\sum_{j=1}^N\beta_{ij}^\sigma\left(x_j-x_i\right)+\iota_i^\sigma\left(x_0-x_i\right)$, hence one deduces $\xi_\star=(\mathcal{H}_\star^\sigma\otimes I_n)e_\star$ and $\xi_\star=col\{\xi_i,~i\in\mathcal{V}_\star^\sigma\}$. The dynamics of $e_i$ give rise to
\begin{align}\label{error-mas}
	\dot{e}_i=&Ae_i+f(x_i)-f(x_0)\notag\\
	&+\kappa_iB\mathcal{K}\left(\sum_{j=1}^N\beta_{ij}^\sigma(x_j-x_i)+\iota_i^\sigma(x_0-x_i)\right).
\end{align}
Therefore, one can also deduce
\begin{align}
	\dot{\xi}_\star=&(\mathcal{H}_\star^\sigma\otimes I_n)\dot{e}_\star\notag\\
	=&(\mathcal{H}_\star^\sigma\otimes A)e_\star+(\mathcal{H}_\star^\sigma\otimes I_n)\tilde{f}_\star(\bar{x}_\star,\bar{x}_{0,\star})\notag\\
	&-\left(\mathcal{H}_\star^\sigma\Upsilon_\star^\sigma\mathcal{H}_\star^\sigma\otimes B\mathcal{K}\right)e_\star\notag\\
	=&(I_{\pi(\sigma)}\otimes A)\xi_\star+(\mathcal{H}_\star^\sigma\otimes I_n)\tilde{f}_\star(\bar{x}_\star,\bar{x}_{0,\star})\notag\\
	&-\left(\mathcal{H}_\star^\sigma\Upsilon_\star^\sigma\otimes B\mathcal{K}\right)\xi_\star,
\end{align}
where the last equal sign holds because $\mathcal{H}_\star^\sigma$ is a nonsingular matrix; $\tilde{f}_\star(\bar{x}_\star,\bar{x}_{0,\star})=col\{\tilde{f}_i(x_i,x_0),~i\in\mathcal{V}_\star^\sigma\}$ with $\tilde{f}_i(x_i,x_0)=f(x_i)-f(x_0)$, $\bar{x}_\star=col\{x_i,~i\in\mathcal{V}_\star\}$, and $\bar{x}_{0,\star}=1_{\pi(\sigma)}\otimes x_0$; $\pi(\sigma)$ represents the cardinal number of $\mathcal{V}_\star^\sigma$; $\Upsilon_\star^\sigma=diag\{\kappa_i,~i\in\mathcal{V}_\star^\sigma\}$. 

Set $\mathcal{K}=KK^TB^TP$ and choose a positive definite matrix $\Theta=diag\{\theta_i^\sigma,~i\in\mathcal{V}_\star^\sigma\}$ satisfying $sym\{\Theta_\star^\sigma\mathcal{H}_\star^\sigma\}>\kappa_*I_{\pi(\sigma)}$. Then, the Lyapunov candidate can be chosen as $V_1=\xi_\star^T(\Upsilon_\star^\sigma\Theta_\star^\sigma\otimes P)\xi_\star$, and its derivative along with $\xi_\star$ yields
\begin{align}\label{V1}
	\dot{V}_1(t)=&\xi_\star^T\left(\Upsilon_\star^\sigma\Theta_\star^\sigma\otimes(PA+A^TP)\right)\xi_\star\notag\\
	&-\xi_\star^T\left(\Upsilon_\star^\sigma sym\{\Theta_\star^\sigma\mathcal{H}_\star^\sigma\}\Upsilon_\star^\sigma\otimes PBKK^TB^TP\right)\xi_\star\notag\\
	&+2\xi_\star^T(\Upsilon_\star^\sigma\Theta_\star^\sigma\mathcal{H}_\star^\sigma\otimes P)\tilde{f}_\star(\bar{x}_\star,\bar{x}_{0,\star})\notag\\
	\leq&\xi_\star^T\left(\Upsilon_\star^\sigma\Theta_\star^\sigma\otimes(PA+A^TP)\right)\xi_\star\notag\\
	&-\kappa_*\xi_\star^T\left(\Upsilon_\star^\sigma\Upsilon_\star^\sigma\otimes PBKK^TB^TP\right)\xi_\star\notag\\
	&-\kappa_*\xi_\star^T\left(\Theta_\star^\sigma\Theta_\star^\sigma\otimes I_n\right)\xi_\star+\kappa_*\xi_\star^T\left(\Theta_\star^\sigma\Theta_\star^\sigma\otimes I_n\right)\xi_\star\notag\\
	&+2\xi_\star^T(\Upsilon_\star^\sigma\Theta_\star^\sigma\mathcal{H}_\star^\sigma\otimes P)\tilde{f}_\star(\bar{x}_\star,\bar{x}_{0,\star}).
\end{align}

Note that
\begin{align}
	&\kappa_*\xi_\star^T\left(\left(\Upsilon_\star^\sigma\right)^2\otimes PBKK^TB^TP\right)\xi_\star+\kappa_*\xi_\star^T(\Theta_\star^\sigma\Theta_\star^\sigma\otimes I_n)\xi_\star\notag\\
	=&\kappa_*\sum_{i\in\mathcal{V}_\star^\sigma}\xi_i^T\left(\kappa_i^2PBKK^TB^TP+(\theta_i^\sigma)^2I_n\right)\xi_i\notag\\
	=&\kappa_*\sum_{i\in\mathcal{V}_\star^\sigma}\xi_i^T\left(\left(\kappa_iPBK\right)\left(\kappa_iPBK\right)^T+(\theta_i^\sigma)^2I_n\right)\xi_i\notag\\
	\geq&\kappa_*\sum_{i\in\mathcal{V}_\star^\sigma}\xi_i^T\left(\theta_i^\sigma\kappa_iPBK+\theta_i^\sigma\kappa_iK^TB^TP\right)\xi_i\notag\\
	=&\kappa_*\xi_\star^T\left(\Upsilon_\star^\sigma\Theta_\star^\sigma\otimes\left(PBK+K^TB^TP\right)\right)\xi_\star.
\end{align}

Subsequently, find $K$ such that $A-\kappa_*BK$ is Hurwitz and further find symmetric positive definite matrix $P$ so that $sym\{P(A-\kappa_*BK)\}=-\kappa_*I_n$. Then,
\begin{align}\label{t1dv1}
	&\xi_\star^T(\Upsilon_\star^\sigma\Theta_\star^\sigma\otimes (PA+A^TP))\xi_\star\notag\\
	-&\kappa_*\xi_\star^T\left(\Upsilon_\star^\sigma \Upsilon_\star^\sigma\otimes PBKK^TB^TP\right)\xi_\star\notag\\
	-&\kappa_*\xi_\star^T\left(\Theta_\star^\sigma\Theta_\star^\sigma\otimes I_n\right)\xi_\star+\kappa_*\xi_\star^T\left(\Theta_\star^\sigma\Theta_\star^\sigma\otimes I_n\right)\xi_\star\notag\\
	\leq&\xi_\star^T(\Upsilon_\star^\sigma\Theta_\star^\sigma\otimes (PA+A^TP))\xi_\star\notag\\
	-&\kappa_*\xi_\star^T\left(\Upsilon_\star^\sigma\Theta_\star^\sigma\otimes\left(PBK+K^TB^TP\right)\right)\xi_\star\notag\\
	+&\kappa_*\xi_\star^T\left(\Theta_\star^\sigma\Theta_\star^\sigma\otimes I_n\right)\xi_\star\notag\\
	=&\xi_\star^T(\Upsilon_\star^\sigma\Theta_\star^\sigma\otimes sym\{P(A-\kappa_*BK)\})\xi_\star\notag\\
	+&\kappa_*\xi_\star^T\left(\Theta_\star^\sigma\Theta_\star^\sigma\otimes I_n\right)\xi_\star\notag\\
	\leq&\left(-\kappa_*\underline{\kappa}_\star^\sigma\underline{\theta}_\star^\sigma+\kappa_*\left(\bar{\theta}_\star^\sigma\right)^2\right)\|\xi_\star\|^2.
\end{align}

Now we focus on the nonlinear term $\xi_\star^T(\Upsilon_\star^\sigma\Theta_\star^\sigma\mathcal{H}_\star^\sigma\otimes P)\tilde{f}_\star(\bar{x}_\star,\bar{x}_{0,\star})$. Denote $\wp_{\star1}=\rho\bar{\kappa}_\star^\sigma\bar{\theta}_\star^\sigma\bar{\lambda}(P)\bar{\lambda}(\mathcal{H}_\star^\sigma)\underline{\lambda}(\mathcal{H}_\star^\sigma)$, and thus yield
\begin{align}\label{t1dv2}
	&\xi_\star^T(\Upsilon_\star^\sigma\Theta_\star^\sigma\mathcal{H}_\star^\sigma\otimes P)\tilde{f}_\star(\bar{x}_\star,\bar{x}_{0,\star})\notag\\
	\leq&\frac{\wp_{\star1}}{\rho\underline{\lambda}(\mathcal{H}_\star^\sigma)}\|\xi_\star\|\|\tilde{f}_\star(\bar{x}_\star,\bar{x}_{0,\star})\|\notag\\
	=&\frac{\wp_{\star1}}{\rho\underline{\lambda}(\mathcal{H}_\star^\sigma)}\|\xi_\star\|\sum_{i\in\mathcal{V}_\star^\sigma}\|\tilde{f}_i(x_i,x_0)\|\notag\\
	=&\frac{\wp_{\star1}}{\underline{\lambda}(\mathcal{H}_\star^\sigma)}\|\xi_\star\|\sum_{i\in\mathcal{V}_\star^\sigma}\|e_i\|\leq\frac{\wp_{\star1}}{\underline{\lambda}(\mathcal{H}_\star^\sigma)}\|\xi_\star\|\left(\sum_{i\in\mathcal{V}_\star^\sigma}\|e_i\|\right)^{2\times \frac{1}{2}}\notag\\
	\leq&\frac{\wp_{\star1}}{\underline{\lambda}(\mathcal{H}_\star^\sigma)}\|\xi_\star\|\left(N\sum_{i\in\mathcal{V}_\star^\sigma}\|e_i\|^2\right)^{\frac{1}{2}}=\frac{\sqrt{N}\wp_{\star1}}{\underline{\lambda}(\mathcal{H}_\star^\sigma)}\|\xi_\star\|\|e_\star\|\notag\\
	\leq&\sqrt{N}\wp_{\star1}\|\xi_\star\|^2.
\end{align}

Substituting (\ref{t1dv1}) and (\ref{t1dv2}) into (\ref{V1}), one has
\begin{align}\label{V2}
	\dot{V}_1(t)\leq&\left(-\kappa_*\underline{\kappa}_\star^\sigma\underline{\theta}_\star^\sigma+\kappa_*\left(\bar{\theta}_\star^\sigma\right)^2\right)\|\xi_\star\|^2\notag\\
	+&2\sqrt{N}\rho\bar{\lambda}(P)\bar{\lambda}(\mathcal{H}_\star^\sigma)\underline{\lambda}(\mathcal{H}_\star^\sigma)\bar{\kappa}_\star^\sigma\bar{\theta}_\star^\sigma\|\xi_\star\|^2.
\end{align}
Choose $\kappa_*=(\bar{\kappa}_\star^\sigma)^{1/2}$ such that $\dot{V}_1<0$ for a group of large enough $\kappa_i$. Then, substituting the relationship (\ref{rela}) into (\ref{V2}), one has
\begin{align}
	\dot{V}_1\leq -\frac{\bar{\kappa}_\star^\sigma\bar{\theta}_\star^\sigma}{2}\wp_\star\|\xi_\star\|^2\leq-2\wp_\star V_1.
\end{align}
Therefore, $V_1(t)\leq V_1(t_m)\exp\{-2\wp_\star (t-t_m)\}$, where $t_m=\max_k\{t_k\leq t\}$ with $t_k$ being the switching instant. Therefore,
\begin{align}
	&\frac{1}{2}\underline{\lambda}(P)\underline{\kappa}_\star^\sigma\underline{\theta}_\star^\sigma\|\xi_\star\|^2\leq\frac{1}{2}\xi_\star^T\left(\Upsilon_\star^\sigma\Theta_\star^\sigma\otimes P\right)\xi_\star\notag\\
	\leq&\frac{1}{2}\xi_\star^T(t_m)\left(\Upsilon_\star^\sigma\Theta_\star^\sigma\otimes P\right)\xi_\star(t_m)\exp\{-2\wp_\star (t-t_m)\}\notag\\
	\leq&\frac{1}{2}\bar{\kappa}_\star^\sigma\bar{\theta}_\star^\sigma\bar{\lambda}(P)\|\xi_\star(t_m)\|^2\exp\{-2\wp_\star (t-t_m)\},
\end{align}
where $\underline{\theta}^\sigma=\min\{\theta_i^\sigma,~i\in\mathcal{V}_\star^\sigma\}$. We thus have
\begin{align}
	\|\xi_\star(t)\|\leq\sqrt{\frac{\bar{\kappa}_\star^\sigma\bar{\theta}_\star^\sigma\bar{\lambda}(P)}{\underline{\kappa}_\star^\sigma\underline{\theta}_\star^\sigma\underline{\lambda}(P)}}\|\xi_\star(t_m)\|\exp\{-\wp_\star (t-t_m)\}.
\end{align}
and
\begin{align}
	\|e_i(t)\|\leq\sqrt{\frac{\bar{\kappa}_\star^\sigma\bar{\theta}_\star^\sigma\bar{\lambda}(P)\bar{\lambda}(\mathcal{H}_\star^\sigma)}{\underline{\kappa}_\star^\sigma\underline{\theta}_\star^\sigma\underline{\lambda}(P)\underline{\lambda}(\mathcal{H}_\star^\sigma)}}\|e_i(t_m)\|\exp\{-\wp_\star (t-t_m)\}.
\end{align}
It shows that the convergence rate of error dynamics is at least $\wp_\star$, which leads to the validity of (\ref{rela}).
\end{IEEEproof}



%





\ifCLASSOPTIONcaptionsoff
  \newpage
\fi





\bibliographystyle{IEEEtran}
\bibliography{trans}
%





\vfill


\end{document}